\begin{document}

\newtheorem{theorem}{Theorem}
\newtheorem{lemma}{Lemma}
\newtheorem{proposition}{Proposition}
\newtheorem{rmk}{Remark}
\newtheorem{example}{Example}
\newtheorem{exercise}{Exercise}
\newtheorem{definition}{Definition}
\newtheorem{corollary}{Corollary}
\newtheorem{notation}{Notation}
\newtheorem{claim}{Claim}

\newtheorem{dif}{Definition}

 \newtheorem{thm}{Theorem}[section]
 \newtheorem{cor}[thm]{Corollary}
 \newtheorem{lem}[thm]{Lemma}
 \newtheorem{prop}[thm]{Proposition}
 \theoremstyle{definition}
 \newtheorem{defn}[thm]{Definition}
 \theoremstyle{remark}
 \newtheorem{rem}[thm]{Remark}
 \newtheorem*{ex}{Example}
 \numberwithin{equation}{section}

\newcommand{\vertiii}[1]{{\left\vert\kern-0.25ex\left\vert\kern-0.25ex\left\vert #1
    \right\vert\kern-0.25ex\right\vert\kern-0.25ex\right\vert}}

\newcommand{\R}{{\mathbb R}}
\newcommand{\C}{{\mathbb C}}
\newcommand{\U}{{\mathcal U}}
\newcommand{\norm}[1]{\left\|#1\right\|}
\renewcommand{\(}{\left(}
\renewcommand{\)}{\right)}
\renewcommand{\[}{\left[}
\renewcommand{\]}{\right]}
\newcommand{\f}[2]{\frac{#1}{#2}}
\newcommand{\im}{i}
\newcommand{\cl}{{\mathcal L}}
\newcommand{\ck}{{\mathcal K}}

\newcommand{\al}{\alpha}
\newcommand{\vro}{\varrho}
\newcommand{\be}{\beta}
\newcommand{\wh}[1]{\widehat{#1}}
\newcommand{\ga}{\gamma}
\newcommand{\Ga}{\Gamma}
\newcommand{\de}{\delta}
\newcommand{\ben}{\beta_n}
\newcommand{\De}{\Delta}
\newcommand{\ve}{\varepsilon}
\newcommand{\ze}{\zeta}
\newcommand{\Th}{\Theta}
\newcommand{\ka}{\kappa}
\newcommand{\la}{\lambda}
\newcommand{\laj}{\lambda_j}
\newcommand{\lak}{\lambda_k}
\newcommand{\La}{\Lambda}
\newcommand{\si}{\sigma}
\newcommand{\Si}{\Sigma}
\newcommand{\vp}{\varphi}
\newcommand{\om}{\omega}
\newcommand{\Om}{\Omega}
\newcommand{\ra}{\rightarrow}

\newcommand{\ro}{{\mathbf R}}
\newcommand{\rn}{{\mathbf R}^n}
\newcommand{\rd}{{\mathbf R}^d}
\newcommand{\rmm}{{\mathbf R}^m}
\newcommand{\rone}{\mathbb R}
\newcommand{\rtwo}{\mathbf R^2}
\newcommand{\rthree}{\mathbf R^3}
\newcommand{\rfour}{\mathbf R^4}
\newcommand{\ronen}{{\mathbf R}^{n+1}}
\newcommand{\ku}{\mathbf u}
\newcommand{\kw}{\mathbf w}
\newcommand{\kf}{\mathbf f}
\newcommand{\kz}{\mathbf z}

\newcommand{\N}{\mathbf N}

\newcommand{\tn}{\mathbf T^n}
\newcommand{\tone}{\mathbf T^1}
\newcommand{\ttwo}{\mathbf T^2}
\newcommand{\tthree}{\mathbf T^3}
\newcommand{\tfour}{\mathbf T^4}

\newcommand{\zn}{\mathbf Z^n}
\newcommand{\zp}{\mathbf Z^+}
\newcommand{\zone}{\mathbf Z^1}
\newcommand{\zz}{\mathbf Z}
\newcommand{\ztwo}{\mathbf Z^2}
\newcommand{\zthree}{\mathbf Z^3}
\newcommand{\zfour}{\mathbf Z^4}

\newcommand{\hn}{\mathbf H^n}
\newcommand{\hone}{\mathbf H^1}
\newcommand{\htwo}{\mathbf H^2}
\newcommand{\hthree}{\mathbf H^3}
\newcommand{\hfour}{\mathbf H^4}

\newcommand{\cone}{\mathbf C^1}
\newcommand{\ctwo}{\mathbf C^2}
\newcommand{\cthree}{\mathbf C^3}
\newcommand{\cfour}{\mathbf C^4}
\newcommand{\dpr}[2]{\langle #1,#2 \rangle}

\newcommand{\sn}{\mathbf S^{n-1}}
\newcommand{\sone}{\mathbf S^1}
\newcommand{\stwo}{\mathbf S^2}
\newcommand{\sthree}{\mathbf S^3}
\newcommand{\sfour}{\mathbf S^4}

\newcommand{\lp}{L^{p}}
\newcommand{\lppr}{L^{p'}}
\newcommand{\lqq}{L^{q}}
\newcommand{\lr}{L^{r}}
\newcommand{\echi}{(1-\chi(x/M))}
\newcommand{\chip}{\chi'(x/M)}

\newcommand{\wlp}{L^{p,\infty}}
\newcommand{\wlq}{L^{q,\infty}}
\newcommand{\wlr}{L^{r,\infty}}
\newcommand{\wlo}{L^{1,\infty}}

\newcommand{\lprn}{L^{p}(\rn)}
\newcommand{\lptn}{L^{p}(\tn)}
\newcommand{\lpzn}{L^{p}(\zn)}
\newcommand{\lpcn}{L^{p}(\cn)}
\newcommand{\lphn}{L^{p}(\cn)}

\newcommand{\lprone}{L^{p}(\rone)}
\newcommand{\lptone}{L^{p}(\tone)}
\newcommand{\lpzone}{L^{p}(\zone)}
\newcommand{\lpcone}{L^{p}(\cone)}
\newcommand{\lphone}{L^{p}(\hone)}

\newcommand{\lqrn}{L^{q}(\rn)}
\newcommand{\lqtn}{L^{q}(\tn)}
\newcommand{\lqzn}{L^{q}(\zn)}
\newcommand{\lqcn}{L^{q}(\cn)}
\newcommand{\lqhn}{L^{q}(\hn)}

\newcommand{\lo}{L^{1}}
\newcommand{\lt}{L^{2}}
\newcommand{\li}{L^{\infty}}
\newcommand{\beqn}{\begin{eqnarray*}}
\newcommand{\eeqn}{\end{eqnarray*}}
\newcommand{\pplus}{P_{Ker[\cl_+]^\perp}}

\newcommand{\co}{C^{1}}
\newcommand{\ci}{C^{\infty}}
\newcommand{\coi}{C_0^{\infty}}

\newcommand{\ca}{\mathcal A}
\newcommand{\cs}{\mathcal S}
\newcommand{\cm}{\mathcal M}
\newcommand{\cf}{\mathcal F}
\newcommand{\cb}{\mathcal B}
\newcommand{\ce}{\mathcal E}
\newcommand{\cd}{\mathcal D}
\newcommand{\cn}{\mathcal N}
\newcommand{\cz}{\mathcal Z}
\newcommand{\crr}{\mathbf R}
\newcommand{\cc}{\mathcal C}
\newcommand{\ch}{\mathcal H}
\newcommand{\cq}{\mathcal Q}
\newcommand{\cp}{\mathcal P}
\newcommand{\cx}{\mathcal X}
\newcommand{\eps}{\epsilon}

\newcommand{\pv}{\textup{p.v.}\,}
\newcommand{\loc}{\textup{loc}}
\newcommand{\intl}{\int\limits}
\newcommand{\iintl}{\iint\limits}
\newcommand{\dint}{\displaystyle\int}
\newcommand{\diint}{\displaystyle\iint}
\newcommand{\dintl}{\displaystyle\intl}
\newcommand{\diintl}{\displaystyle\iintl}
\newcommand{\liml}{\lim\limits}
\newcommand{\suml}{\sum\limits}
\newcommand{\ltwo}{L^{2}}
\newcommand{\supl}{\sup\limits}
\newcommand{\df}{\displaystyle\frac}
\newcommand{\p}{\partial}
\newcommand{\Ar}{\textup{Arg}}
\newcommand{\abssigk}{\widehat{|\si_k|}}
\newcommand{\ed}{(1-\p_x^2)^{-1}}
\newcommand{\tT}{\tilde{T}}
\newcommand{\tV}{\tilde{V}}
\newcommand{\wt}{\widetilde}
\newcommand{\Qvi}{Q_{\nu,i}}
\newcommand{\sjv}{a_{j,\nu}}
\newcommand{\sj}{a_j}
\newcommand{\pvs}{P_\nu^s}
\newcommand{\pva}{P_1^s}
\newcommand{\cjk}{c_{j,k}^{m,s}}
\newcommand{\Bjsnu}{B_{j-s,\nu}}
\newcommand{\Bjs}{B_{j-s}}
\newcommand{\Ly}{\cl_+i^y}
\newcommand{\dd}[1]{\f{\partial}{\partial #1}}
\newcommand{\czz}{Calder\'on-Zygmund}
\newcommand{\chh}{\mathcal H}

\newcommand{\lbl}{\label}
\newcommand{\beq}{\begin{equation}}
\newcommand{\eeq}{\end{equation}}
\newcommand{\beqna}{\begin{eqnarray*}}
\newcommand{\eeqna}{\end{eqnarray*}}
\newcommand{\bp}{\begin{proof}}
\newcommand{\ep}{\end{proof}}
\newcommand{\bprop}{\begin{proposition}}
\newcommand{\eprop}{\end{proposition}}
\newcommand{\bt}{\begin{theorem}}
\newcommand{\et}{\end{theorem}}
\newcommand{\bex}{\begin{Example}}
\newcommand{\eex}{\end{Example}}
\newcommand{\bc}{\begin{corollary}}
\newcommand{\ec}{\end{corollary}}
\newcommand{\bcl}{\begin{claim}}
\newcommand{\ecl}{\end{claim}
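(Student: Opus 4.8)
The plan is to decompose the operator into dyadic annular pieces and run an almost-orthogonality argument of Duoandikoetxea--Rubio de Francia type, so that a favorable $L^2$ estimate (which uses cancellation) can be interpolated, scale by scale, against a crude Calder\'on--Zygmund bound (which uses only size). Concretely, split the kernel as $\suml_{k\in\zz}\si_k$ with $\si_k$ supported where the relevant variable has size $\sim 2^k$; write the operator as $\suml_k T_k$; and group the $T_k$ according to the distance $s$ between the annular scale $2^k$ and the frequency scale of a Littlewood--Paley projection $\pvs$, so that the operator becomes $\suml_{s\ge 0}\bigl(\suml_j \Bjs\bigr)$.

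\emph{$L^2$ and $L^p$ bounds for the blocks.} First I would prove Fourier-side estimates for $\widehat{\si_k}$: a gain $\lesssim (2^k|\xi|)^{a}$ when $2^k|\xi|\le 1$, extracted from cancellation/smoothness of $\si_k$, and decay $\lesssim (2^k|\xi|)^{-b}$ when $2^k|\xi|\ge 1$, from integration by parts. Inserting the projections $\pvs$ and summing geometric series, these give $\|\suml_j\Bjs\|_{L^2\to L^2}\lesssim 2^{-\de s}$. Separately I would bound the kernels of the blocks and their derivatives pointwise in terms of $\widehat{|\si_k|}$ (equivalently, $L^1$-type norms of $\si_k$), so that each block satisfies Calder\'on--Zygmund size and H\"ormander conditions with constants $\cjk$ that grow at most polynomially in $s$; together with the $L^2$ bound this yields a weak-$(1,1)$ estimate, hence $\|\suml_j\Bjs\|_{L^p\to L^p}\lesssim \langle s\rangle^{N}$ for every $1<p<\infty$.

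\emph{Assembly and the main difficulty.} Interpolating the two block estimates gives $\|\suml_j\Bjs\|_{L^p\to L^p}\lesssim 2^{-\de(p)s}$ with $\de(p)>0$ for each $1<p<\infty$; summing over $s\ge 0$ then yields the theorem, with duality treating $p<2$ and $p>2$ symmetrically, and any external parameter (such as the imaginary power $y$ in $\Ly$, or the shift defining $\pplus$) entering only through the constants and surviving the same summation. The crux is the small-frequency gain: one must genuinely exploit the cancellation of $\si_k$ rather than just its size, and must ensure the resulting decay in $s$ strictly beats the polynomial-in-$s$ loss from the Calder\'on--Zygmund step. A minor additional bookkeeping point is the finite-rank, smoothing correction attached to the kernel of $\cl_+$ (and its orthogonal complement): it is harmless for boundedness, but it must be carried along so the orthogonality hypotheses are used consistently.
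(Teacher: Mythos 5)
Your proposal does not address any statement that is actually proved in this paper. The fragment you were asked about is not a mathematical claim at all (it is a piece of the preamble defining the \texttt{claim} environment), and, more importantly, the argument you have written — a dyadic decomposition of a kernel into pieces $\si_k$, almost-orthogonality via Fourier-side gain and decay for $\widehat{\si_k}$, Littlewood--Paley blocks $\suml_j \Bjs$, a weak-$(1,1)$ Calder\'on--Zygmund estimate with polynomial-in-$s$ constants, and interpolation summed over $s$ — is the standard Duoandikoetxea--Rubio de Francia scheme for $L^p$ boundedness of rough singular integrals or oscillatory multipliers. Nothing of that kind appears anywhere in this paper. The macros you built your argument around ($\si_k$, $P_\nu^s$, $B_{j-s}$, $\cl_+ i^y$, the projection onto $Ker[\cl_+]^\perp$, etc.) are defined in the preamble but never used in the body; they are leftovers from an unrelated document, and treating them as the objects of a theorem has led you to prove a statement the authors never assert.

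The results this paper actually establishes are of a completely different nature: existence and uniqueness of bell-shaped compactly supported solutions of the profile ODE \eqref{40} via quadrature (Proposition \ref{prop:12}), existence of constrained minimizers by rearrangement and compactness (Proposition \ref{prop:10}), the Euler--Lagrange equation and explicit constants (Proposition \ref{prop:22}), the spectral counts $n(\cl_+)=1$, $Ker(\cl_+)=span[\Phi']$, $\cl_-\geq 0$ with $Ker(\cl_-)=span[\Phi]$ via the Liouville-type change of variables \eqref{110} and Sturm--Liouville theory (Propositions \ref{prop:45} and \ref{L_minus}), and finally the stability criterion $2<p\leq 8$ from the Hamiltonian--Krein index formula \eqref{HKIF} and the sign of $\p_\om \int \phi^2$. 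None of these steps can be obtained, or even approached, by the interpolation-over-scales machinery you describe; there is no singular integral to decompose and no $L^p\to L^p$ bound to prove. You should identify which of the paper's actual propositions you intend to prove and restart from the relevant objects: the ODE \eqref{40}, the functional $N_0$ in \eqref{34}, or the operators $\cl_\pm$ and their conjugates $L_\pm$.
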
}
\newcommand{\bl}{\begin{lemma}}
\newcommand{\el}{\end{lemma}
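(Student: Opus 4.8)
The plan is to reduce the estimate to a dyadic family of \czz\ operators and then reassemble the pieces by an almost-orthogonality argument. Writing $\cl_+ = (1-\p_x^2) + V$, where $V$ is the smooth, rapidly decaying potential coming from the linearization, I would first split the operator in question into a constant-coefficient part and a perturbative part. The constant-coefficient part is the corresponding function of $1-\p_x^2$: this is a Fourier multiplier whose symbol satisfies Mikhlin--H\"ormander bounds with precisely the parameter growth claimed (each derivative of the symbol costs a power of the spectral parameter, as happens for an imaginary power), so it is a \czz\ operator, with norm of the asserted size, by the classical multiplier theory. The perturbative part is produced by the resolvent identity together with the operator $\ed$, which gains two derivatives and thereby turns the potential $V$ into a genuinely smoothing contribution; this isolates the one delicate mechanism, namely the interaction of $V$ with the high frequencies.

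For the high-frequency regime I would use the Littlewood--Paley projections $\pvs$ adapted to the problem and decompose the kernel of the $s$-th frequency block as a sum $\sum_j \Bjsnu$, with each $\Bjsnu$ localized at spatial scale $2^{-j}$, then check that these building blocks obey size and smoothness bounds of \czz\ type with constants decaying geometrically in $|j-s|$. That geometric gain is exactly what the smoothness and decay of the symbol buy, as measured by the quantities $\abssigk$; combined with the $\lt \to \lt$ bound and the H\"ormander integral condition it yields $\lp$ control of each block, summable over $j$ and $s$ with no loss, and at the endpoint an $\lo \to \wlo$ bound after a \czz\ decomposition over the cubes $\Qvi$ adapted to the scaling. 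The low-frequency, near-resonant part is treated separately: $\pplus$ removes the obstruction caused by the kernel of $\cl_+$, on whose orthogonal complement $\cl_+$ is invertible with $\ed$-type bounds, and the residual piece is handled directly by a Schur test or Young's inequality.

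The main obstacle is that $\cl_+$ is not translation invariant, so the dyadic \czz\ machinery --- cleanest for convolution operators --- has to be run on the rescaled kernels $\Bjsnu$ while carrying along the potential $V$ and the commutators $[\cl_+,\pvs]$. The real work is to show that these commutators cost only an admissible amount, one that is absorbed into the geometric factor in $|j-s|$ rather than destroying it; a secondary technical point is arranging the good/bad decomposition at the endpoint to be compatible both with the scaling and with the frequency localization, so that the bad part genuinely cancels and the good part lands in $\lt$ with the correct constant. Everything else --- the Mikhlin bounds for the constant-coefficient model, the resolvent expansion, and the Schur estimate for the low-frequency part --- should be routine.
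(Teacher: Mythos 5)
There is a fundamental mismatch here: your argument does not prove any statement that actually appears in the paper. The lemmas established in this work are (i) the Polya--Szeg\"o inequality, (ii) a shifted form of Hardy's inequality on an interval, and (iii) the key technical Lemma asserting that the change of variables $t(x)=\int_0^x \Phi(y)^{-1}\,dy$ maps $(-L,L)$ onto $\rone$ with exponential rates and conjugates the degenerate operator $\cl_+$ into an ordinary Schr\"odinger operator $L_+=-\p_t^2+\f{\om}{4}-W(t)$ with exponentially decaying potential, via $\cl_+ f = L_+(\sqrt{\Phi}f)/\sqrt{\Phi}$. Your proposal instead sets out to prove an $\lp$ (and endpoint $\lo\to\wlo$) bound for a function of $\cl_+$ by \czz\ theory, Littlewood--Paley decomposition, and Mikhlin--H\"ormander multiplier estimates. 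No such estimate is claimed or used anywhere in the paper, and the machinery you invoke is never needed: all the spectral information required (Morse index, simplicity of the zero eigenvalue, the spectral gap \eqref{215}) is extracted from the quadratic form, the constrained variational characterization, Sturm--Liouville theory for $L_+$, and the Hardy inequality.

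Beyond being aimed at the wrong target, the first step of your plan fails concretely for the operator at hand. You write $\cl_+=(1-\p_x^2)+V$ with $V$ smooth and rapidly decaying, but the actual operator is $\cl_+ f=-\Phi\p_x^2(\Phi f)-(p-2)c(\om)\Phi^{p-2}f$, whose principal coefficient $\Phi^2$ vanishes quadratically at the endpoints $\pm L$ of the support. This is a \emph{degenerate} elliptic operator on a bounded interval, not a short-range perturbation of $1-\p_x^2$ on the line; it is not even translation invariant to leading order, so the constant-coefficient/perturbative splitting, the resolvent expansion against $\ed$, and the Mikhlin bounds for the model operator all collapse at the outset. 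The degeneracy at $\pm L$ is precisely the difficulty the paper must address, and it does so not by frequency decomposition but by the Liouville-type change of variables above, which pushes the degenerate endpoints to $\pm\infty$ and converts the problem into classical one-dimensional Schr\"odinger spectral theory. If you want to recover the paper's lemma, the computation to carry out is the conjugation identity $\Phi^{1/2}\cl_+[f\Phi^{-1/2}]=L_+f$ using the first integrals $(\Phi')^2=\om-\f{2c(\om)}{p}\Phi^{p-2}$ and $\Phi\Phi''=\f{2-p}{p}c(\om)\Phi^{p-2}$, together with the L'Hospital asymptotics $\lim_{x\to L-}\ln(L-x)/t(x)=-\sqrt{\om}$ giving the exponential rates \eqref{152}--\eqref{153}.
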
}
\newcommand{\dea}{(-\De)^\be}
\newcommand{\naa}{|\nabla|^\be}
\newcommand{\cj}{{\mathcal J}}
\newcommand{\ubb}{{\mathbf u}}

\title[On the stability of the compacton waves for the degenerate KdV  
	 and NLS]{On the stability of the compacton waves for the degenerate KdV  
	and NLS models}

\author{Sevdzhan Hakkaev}
\address{
	Faculty of Arts and Science, Istanbul Aydin University, Istanbul, Turkey; 
	Faculty of Mathematics and Informatics, Shumen University, 9712
	Shumen, Bulgaria}\email{s.hakkaev@shu.bg}
\author[Abba Ramadan]{\sc Abba Ramadan}
\address{ Department of Mathematics,
University of Kansas,
1460 Jayhawk Boulevard,  Lawrence KS 66045--7523, USA}
\email{aramadan@ku.edu}
\author[Atanas G. Stefanov]{\sc Atanas G. Stefanov}
\address{ Department of Mathematics,
	University of Alabama - Birmingham, 
	University Hall, Room 4005, 
	1402 10th Avenue South
	Birmingham AL 35294-1241
	 }
\email{stefanov@uab.edu}

\subjclass[2010]{Primary 35Q55, 35Q53; Secondary 35Q41}


\date{\today}
 
\begin{abstract}
In this paper,  we consider the degenerate semi-linear Schr\"odinger and Korteweg-deVries equations in one spatial dimension. We construct special solutions of the two models, namely  standing wave solutions of NLS and traveling waves, which turn out to have compact support, compactons. We show that the compactons are unique bell-shaped solutions of the corresponding PDE's and for appropriate variational problems as well. 
 We provide a complete spectral characterization of such waves, for all values of $p$. Namely, we show that all waves are spectrally stable for $2<p\leq 8$,  while a single mode 
instability occurs for $p>8$.  This extends previous work of Germain, Harrop-Griffits and Marzuola, \cite{GM1}, who have previously established orbital stability for some specific waves, in the range  $p<8$. 
 
\end{abstract}

\thanks{Ramadan is partially supported by a  graduate research assistantship under NSF-DMS \# 1614734. Stefanov is partially supported by NSF-DMS \# 1908626.} 

\maketitle

\section{Introduction}
In this paper, we shall be interested in some aspects of the dynamics of dispersive equations, driven by  degenerate dispersion. In order to fix ideas, we settle on the one dimensional case on the line, and recall the standard dispersive models. More specifically, for $p>2$, consider the (generalized) Korteweg de Vries equation 
\begin{equation}
\label{01} 
u_t+u_{xxx}+\p_x(|u|^{p-2} u)=0, \ \ u:\rone\times \rone\to \rone
\end{equation}
and the non-linear Schr\"odinger equation 
\begin{equation}
\label{02} 
i u_t+u_{xx}+ |u|^{p-2} u=0, \ \ u:\rone\times \rone\to {\mathbb C}.
\end{equation}
 The Cauchy problem for these models is well-understood, even in low regularity setting. In particular, and for classical solutions, the solutions conserve   energy 
$$
E[u(t)]=\f{1}{2} \int_\rone |u'|^2 dx - \f{1}{p}\int_\rone    |u|^p dx=E[u_0]
$$
and mass 
$$
M[u(t)]=\int_\rone  |u|^2 dx=M[u_0].
$$
The solitary waves of these models, namely the standing waves $e^{i \om t} \phi$ of 
 \eqref{02} and the traveling waves $\phi(x-\om t)$ of \eqref{01}, have received ample attention in the literature. It turns out that they are unique (i.e. for each $\om>0, p>2$, there is an unique function $\phi$ with this property). Their stability is also a classical fact by now (see for example \cite{CL}, but also \cite{CT3, CT4, JC})  - namely these waves are spectrally stable for all $\om>0$, when $2<p<6$,  while they become spectrally unstable for $p>6$, again for all values of $\om>0$. 
 
In this paper, we  investigate   the degenerate KdV and NLS. 
More specifically, for $p>2$, the degenerate KdV is given by 
  \begin{equation}
  \label{10} 
  u_t+ \p_x(u\p_x(u\p_x u)+|u|^{p-2} u)=0, \ \ u:\rone\times \rone\to \rone.
  \end{equation}
The degenerate KdV model was introduced in \cite{Ros, RosHy}, with the main interest in the compacton traveling waves. A  more general Hamiltonian version of this problem, was proposed in \cite{CSS, ZC} and subsequently in  \cite{MCCS}.  

The degenerate NLS takes the form 
  \begin{equation}
  \label{20} 
  i u_t+ \bar{u} \p_x(u \p_x u)+|u|^{p-2} u=0, \ \ u:\rone\times \rone\to {\mathbb C}. 
  \end{equation}
The well-posedness of the Cauchy problem has been studied quite recently in \cite{GM2}.  
  In particular, these models conserve the mass $M[u]=\int_\rone |u|^2$ and the Hamiltonian,  which in this case takes the form 
  $$
  \ch[u(t)]=\f{1}{2} \int_\rone |u \p_x u|^2  - \f{1}{p}\int_\rone   |u|^p dx=\ch[u_0].
  $$
  The non-linear dynamics of an equilibrium solution of \eqref{10} and \eqref{20} heavily depends on the spectral/linear stability of the equilibrium. To that end,  note that the degenerate KdV, \eqref{10}  can be written in the Hamiltonian form 
  $$
  \partial_t u=\cj \frac{\partial \mathcal{H}}{\partial u} 
  $$
  with $L^2(\mathbb{R})$ skew-symmetric  operator $\mathcal{J}=\partial_x:H^1(\mathbb{R})\subseteq L^2(\mathbb{R})\to L^2(\mathbb{R})$, while the degenerate NLS, in the form 
  $$
  \p_t z = \cj \frac{\partial \mathcal{H}}{\partial z} 
  $$
  where $z= \left(\begin{array}{c}
  	 u \\ \bar{u} 
  \end{array} 
  \right) $, $\cj=\left(\begin{array}{cc}
  	0 & 1 \\ -1 & 0
  \end{array}\right)$. 

  Of particular interest will be the solitary wave solutions of \eqref{10} and \eqref{20} respectively. That is, consider solutions of the type  $u(t,x)=\phi(x-\om t)$ in \eqref{10} and $u(t,x)=e^{i \om t} \phi(x)$ in \eqref{20}, for a real-valued\footnote{In fact, we will construct non-negative solutions $\phi>0$.}  function $\phi$, 
  we are lead to the same profile equation for $\phi$, namely 
  \begin{equation}
  \label{40} 
  -\phi \p_x(\phi \p_x \phi)+\om \phi - |\phi|^{p-2} \phi=0.
  \end{equation}
  The existence of such waves, with appropriate properties, such as smoothness, decay  at infinity etc. has 
  been considered in the literature, \cite{GM1}. These results are relatively straightforward, we present a version, which suffice for our purposes, see Proposition \ref{prop:12} below.  To set the notations, we say that a function $f:\rone\to \rone_+$ is {\it bell-shaped}  if it is even, non-negative  and monotone decreasing on $(0, \infty)$. 
  
  If we restrict our considerations to bell-shaped and decaying at $\infty$ solutions, one can say much more. This is the subject of our next first existence and uniqueness result, Proposition \ref{prop:12}. 
  \begin{proposition}(Existence and uniqueness of bell-shaped compactons)
  	\label{prop:12} 
  	
  	Let $\om>0, p>2$. Then, there exists a  compactly supported bell-shaped solution $\phi$, which satisfies the profile equation \eqref{40} and consequently, 
  	\begin{equation}
  		\label{k:129} 
  		\phi'=-\sqrt{\om-\f{2}{p} \phi^{p-2}}, 0<x<L.
  	\end{equation} 
  	Letting   $L: supp \vp=[-L,L]$ and $\phi_0:=\phi(0)$, we have the formulas 
  	\begin{equation}
  	\label{pol:10} 
  	L=L(\om,p)=\f{p^{\f{1}{p-2}}}{2^{\f{p-1}{p-2}}} \om^{\f{4-p}{2(p-2)}}\int_0^1 \f{1}{\sqrt{z-z^{\f{p}{2}}}} dz; \ \  \phi_0=\phi_0(\om,p)=\left(\f{p \om}{2}\right)^{\f{1}{p-2}}. 
  	\end{equation}
  	Conversely, suppose that $\phi$ is a bell-shaped solution $\phi$, which vanishes at $\infty$. Then, $\phi$ is necessarily of the form described in \eqref{k:129}. In particular, it is compactly supported and unique. More precisely,  for each $\om>0, p>2$, there exists an unique bell-shaped element $\phi=\phi_{\om,p}$, with the properties described above. 
  \end{proposition}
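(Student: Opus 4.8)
The plan is to reduce the profile equation \eqref{40} to an elementary first‑order autonomous ODE, solve it by quadrature, and read off both the existence and the uniqueness assertions. On the open set $\{\phi>0\}$ one divides \eqref{40} by $\phi$ to get $-(\phi\phi')'+\omega-\phi^{p-2}=0$; setting $\psi:=\phi^2$ this becomes the Newtonian equation $\psi''=2\omega-2\psi^{(p-2)/2}$, whose conserved energy is $\tfrac12(\psi')^2+V(\psi)=E$ with $V(\psi)=-2\omega\psi+\tfrac4p\psi^{p/2}$; equivalently $-(\phi\phi')^2+\omega\phi^2-\tfrac2p\phi^p\equiv C$ with $C=-E/2$. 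The potential $V$ vanishes exactly at $\psi=0$ and at $\psi=\psi_0:=(p\omega/2)^{2/(p-2)}=\phi_0^2$, is negative on $(0,\psi_0)$, and attains its minimum at $\psi_*=\omega^{2/(p-2)}$. Everything below flows from the phase portrait of this one‑degree‑of‑freedom system.

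For existence I would fix $\phi_0=(p\omega/2)^{1/(p-2)}$ (forced by $\phi'(0)=0$ in \eqref{k:129}) and put $L:=\int_0^{\phi_0}\bigl(\omega-\tfrac2p\sigma^{p-2}\bigr)^{-1/2}\,d\sigma$; the only point to verify is that this integral converges, which holds because the integrand is continuous on $[0,\phi_0)$ and behaves like $c\,(\phi_0-\sigma)^{-1/2}$ near $\sigma=\phi_0$. The strictly decreasing map $\phi\mapsto\int_\phi^{\phi_0}(\omega-\tfrac2p\sigma^{p-2})^{-1/2}\,d\sigma$ carries $[0,\phi_0]$ onto $[0,L]$, so it has an inverse $\Phi:[0,L]\to[0,\phi_0]$; extending by $\phi(x):=\Phi(|x|)$ on $[-L,L]$ and $\phi\equiv0$ outside gives a bell‑shaped, compactly supported $\phi\in H^1(\R)$ that satisfies \eqref{k:129} on $(0,L)$, has $\phi'(0^+)=0$, and satisfies $\phi^2\in C^1(\R)$. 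That this $\phi$ solves \eqref{40} follows by differentiating $(\phi\phi')^2=\omega\phi^2-\tfrac2p\phi^p$ on $(0,L)$ and dividing by $\phi'\neq0$, to get $\phi(\phi\phi')'=\omega\phi-\phi^{p-1}$; the identity extends continuously to $x=0$ by evenness and holds at $x=\pm L$ with every term equal to $0$, since $\phi(\pm L)=0$ and $\phi\phi'$ is continuous there. The closed forms in \eqref{pol:10} drop out of $L$ after the substitution $\sigma=\phi_0 t$ followed by $t^2=z$.

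For the converse, let $\phi$ be a bell‑shaped solution in $H^1(\R)$ (so that the Hamiltonian is finite) which vanishes at infinity, and assume $\phi\not\equiv0$. Monotonicity and non‑negativity force $\{\phi>0\}=(-a,a)$ with $a\in(0,\infty]$; on this interval the reduction applies, $\psi=\phi^2$ is smooth there by bootstrapping, and evenness gives $\phi'(0)=0$, hence $E=V(\psi(0))$. Now I would rule out everything but $E=0$: if $E<0$ the orbit of $\psi$ is either the equilibrium $\psi_*$ or periodic between two positive values, so $\phi$ is bounded away from $0$ and does not vanish at infinity; if $E>0$ the orbit reaches $\psi=0$ at a finite $x=a$ with $\psi'(a^-)=-\sqrt{2E}\neq0$, so $\phi\sim c(a-x)^{1/2}$ near $a$ and $\phi'\notin L^2$, contradicting $\phi\in H^1$. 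Hence $E=0$, which forces $\psi(0)=\psi_0$, i.e.\ $\phi(0)=\phi_0$, and $-(\phi')^2+\omega-\tfrac2p\phi^{p-2}=0$ on $(0,a)$; since $\phi$ is decreasing there the negative root is forced, giving \eqref{k:129}. Consequently $a<\infty$ (the quadrature for the support length converges) and $a=L$; the quadrature determines $\phi$ on $[0,L]$ uniquely and evenness and vanishing determine it elsewhere, so the bell‑shaped profile is unique and equals the one built above.

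The routine parts are the multiplier computation, the convergence and substitution estimates for the integrals, and the interior smoothness. The delicate point is the determination of the integration constant: a merely distributional ``solution'' of \eqref{40} need not have $C=0$, because the cusped profiles corresponding to $E>0$ are also compactly supported and satisfy \eqref{40} in the distributional sense (the singular part of $\partial_x(\phi\partial_x\phi)$ is a Dirac mass at the support boundary, annihilated by the factor $\phi$ which vanishes there). It is exactly the finite‑energy hypothesis $\phi\in H^1(\R)$ — equivalently, continuity of $\phi\phi'=\tfrac12(\phi^2)'$ across the boundary of the support — that excludes these and pins down $E=0$. A secondary point needing care is the behaviour of the constructed compacton at the corners $x=\pm L$, where $\phi'$ jumps from $-\sqrt\omega$ to $0$ but $\phi^2\in C^1$, which is precisely what keeps \eqref{40} valid there.
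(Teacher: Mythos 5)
Your proposal is correct and follows essentially the same route as the paper: divide \eqref{40} by $\phi$, substitute $\psi=\phi^2$, pass to the first integral, and solve by quadrature, with \eqref{pol:10} obtained from the same substitutions. The one place where you go beyond the paper is the converse: the paper tacitly takes the constant of integration to be zero (equivalently, that $\psi'$ vanishes at the edge of the support), whereas you run the full energy trichotomy $E<0$, $E=0$, $E>0$ and correctly observe that the $E>0$ cusped profiles $\phi\sim c(a-x)^{1/2}$ are compactly supported distributional solutions of \eqref{40} that must be excluded by a finite-energy/continuity-of-$\phi\phi'$ hypothesis; this is a genuine refinement of the paper's argument rather than a different method.
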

   As alluded above, our main interest is in the stability of the waves $\phi$. In order to introduce the relevant notions, we need to derive the corresponding  linearized dynamics. We develop the necessary background material in the next section.
  \subsection{Linearizations about the solitary waves and spectral stability}
  Take the ansatz  $u=\phi(x-\om t)+e^{\la t} v(x-\om t)$ and plug it in the degenerate KdV model \eqref{10}. Ignoring high order terms $O(v^2)$, we arrive at the following linearized system 
 \begin{equation}
 \label{Lin}
 \mathcal{J}\ch_+ v=\la v.
 \end{equation}
  where $\cj=\p_x$, while 
  \begin{equation}
  	\label{H}
  	\ch_+ f= -\phi\p_x^2 (\phi f) + (\om   - (\phi\phi')'  - (p-1) \phi^{p-2}) f = -\phi\p_x^2 (\phi f)  - (p-2) \phi^{p-2} f, 
  \end{equation}
  where we have used the relation $\om-(\phi\phi')'=\phi^{p-1}$,  which follows readily from \eqref{40}. Note that the relation that we just used only holds on the support of $\phi$ (i.e. on the interval $[-L,L]$).  This is the relevant form of the eigenvalue problem in the context of  the degenerate KdV model \eqref{01}. 
  
  For the NLS  equation \eqref{20},  linearizing around the standing wave $e^{i\om t}\phi_{\om},$ that is taking $u=e^{i\om t}(\Phi_{\om}+e^{\la t} v)$ and plug in into \eqref{20}. Taking \eqref{40} into account and ignoring the higher order terms $O(|v|^2)$, setting the real and imaginary part of $v$ as $(\Re v,\Im v):=(v_1,v_2)$ we have 
  \begin{equation}
  \label{412} 
   \left(\begin{array}{cc}
  0 & -1 \\ 1 & 0
  \end{array}\right) \left(\begin{array}{cc}
  \ch_+ & 0 \\ 0 & \ch_-
  \end{array}\right) \left(\begin{array}{c}
  \Re v\\ \Im v
  \end{array}\right) = \la \left(\begin{array}{c}
  \Re v\\ \Im v
  \end{array}\right)
  \end{equation}
  where $\ch_+$ is as above and   
  \begin{equation}
  \label{418} 
  	\ch_- f= -\phi\p_x^2 (\phi f) + (\om   + (\phi\phi')'  -  \phi^{p-2}) f = -\phi\p_x^2 (\phi f) + 
  	2 (\om      -  \phi^{p-2}) f,
  \end{equation}
  again by \eqref{40}. 
  A concise form of \eqref{412} is given by 
  \begin{equation}
  \label{eig2}
  \mathcal{J}\ch \vec{v}=\lambda \vec{v}
  \end{equation}
  where we have introduced 
  $$
  \cj:=\left(\begin{array}{cc}
  0 & -1 \\ 1 & 0
  \end{array}\right) , \ch:=\left(\begin{array}{cc}
  \ch_+ & 0 \\ 0 & \ch_-
  \end{array}\right).
  $$
  It is now an opportune time to discuss the domains of the linearized operators $\ch_\pm$. The operator $\ch_\pm$ may be viewed as self-adjoint extensions of the symmetric operators, associated with the quadratic form, acting on $u,v\in H^1(\rone)$, 
  $$
  q_{+}(u,v)=\dpr{(\phi u)'}{(\phi v)'}+\om \dpr{u}{v} - \dpr{((\phi \phi')'+(p-1) \phi^{p-2})u}{v}, 
  $$
  and similar for $\ch_-$. As is well-known, such extensions are generally not unique, but we allow $\ch_\pm$ t o be {\it any} such extension, for our purposes below. 
  
  The  notion of spectral stability is as follows. 
  \begin{definition}
  	\label{defi:ss} Let $\ch_+$ be any self-adjoint extension for the symmetric operator, defined with the form $q_+$. We say that the solution $\phi(x-\om t)$ of the generalized KdV problem \eqref{10} is  spectrally stable, if the eigenvalue problem \eqref{Lin} does not have non-trivial solutions $(\la, v): \Re \la>0, v\neq 0$. 
  	
  	Similarly, the solution $e^{i \om t} \phi$ of the degenerate NLS equation \eqref{20} is called spectrally stable, if the eigenvalue problem \eqref{eig2} does not have non-trivial solutions 
  	$(\la, \vec{v}): \Re \la>0, \vec{v}\neq 0$.
  \end{definition}
  Note that in this definition, we completely sidestep the important issue for local/global well-posedness of the corresponding Cauchy problems. The local aspect of the theory is discussed in the  recent paper \cite{GM2}, but the global well-posedness theory (which is more relevant as far as stability is concerned),  seems lacking at the moment. 
  
  We now state  our main results. 
   \subsection{Main results}
   We start with an existence result for the waves $\phi$, which must satisfy the ordinary differential equation \eqref{40}. In our construction of the waves, we take advantage of a variational construction, so we introduce  our main players.  More specifically, consider the following    constrained minimization problem 
   \begin{equation}
   \label{30} 
   \left\{\begin{array}{l} 
   N[u]=\int_{\rone} |u \p_x u|^2 +\om \int |u|^2\to \min  \\
   \textup{subject to} \  \int |u|^p dx=1.
   \end{array}   \right. 
   \end{equation}
   Equivalently, the same solutions are achieved via the minimization of the so-called Weinstein functional, 
   $$
   J_\om[u]:=\f{\int_{\rone} |u \p_x u|^2 +\om \int |u|^2}{\|u\|_{L^p}^2}. 
   $$
   Note that formally, $J_\om[u]$ is unconstrained, but one can see that without loss of generality, one may consider only $u: \|u\|_{L^p}=1$, which is of course \eqref{30}. 
   The following is the main result of this paper. 
   \begin{theorem}
   	\label{theo:15} 
   	Let $p>2$ and $\om>0$. Then, there exists an unique bell-shaped solution $\phi$ of the profile equation  \eqref{40}, which is a compacton (i.e. it has a compact support).  Their half-period $L$ and their amplitude are given by \eqref{pol:10}. Moreover,  such solutions are the unique constrained minimizers of the variational problem \eqref{30}.  
   	By construction,  $\phi(x-\om t)$ is a traveling wave solution of the degenerate KDV \eqref{10}, while $e^{i \om t} \phi$ is a standing wave solution of the degenerate NLS, \eqref{20}. 
   	
   	Regarding spectral stability, $\phi(x-\om t)$ and $e^{i \om t} \phi(x)$ are spectrally stable solutions (of \eqref{10} and \eqref{20} respectively) if and only if  $2<p\leq 8$. 
   \end{theorem}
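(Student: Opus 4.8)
The plan is to split Theorem \ref{theo:15} into three parts: (i) existence and uniqueness of the bell-shaped compacton, together with the formulas \eqref{pol:10}; (ii) identification of $\phi$ as the unique minimizer of \eqref{30}; and (iii) the spectral stability dichotomy at $p=8$. Part (i) is essentially Proposition \ref{prop:12}, so I would simply invoke it, noting that the first-order reduction \eqref{k:129} already yields the half-period and amplitude by separating variables and integrating $\int_0^{\phi_0} (\om - \tfrac{2}{p} s^{p-2})^{-1/2}\,ds = L$, then substituting $s = \phi_0 z^{1/(p-2)}$ or similar to land on \eqref{pol:10}. For part (ii), I would run the standard concentration-compactness / rearrangement argument: show the infimum in \eqref{30} is positive and finite, take a minimizing sequence, symmetrize (Pólya–Szegő decreases $\int |u\p_x u|^2 = \tfrac14\int |(u^2)'|^2$ and preserves the constraints since $u\mapsto u^2$ commutes with rearrangement), extract a weakly convergent subsequence, rule out vanishing and dichotomy, and obtain a bell-shaped minimizer; the Euler–Lagrange equation is \eqref{40} up to Lagrange multiplier scaling, and by part (i) any bell-shaped solution of \eqref{40} vanishing at infinity is the unique $\phi_{\om,p}$, so the minimizer is unique.

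The heart of the theorem is part (iii). I would set this up via the standard instability index count for Hamiltonian eigenvalue problems $\cj \ch \vec v = \la \vec v$ with $\cj$ skew-adjoint: the number of eigenvalues with positive real part, counted with multiplicity, is controlled by $n(\ch) - n(D)$, where $n(\ch)$ is the number of negative eigenvalues of the (block-diagonal) linearized Hessian and $D$ is the finite matrix of "derivatives of conserved quantities along the constraint directions" — concretely, for KdV a $1\times 1$ scalar $\tfrac{d}{d\om}\|\phi_\om\|_{L^2}^2$ (or its reciprocal $\dpr{\ch_+^{-1}\phi}{\phi}$), and for NLS the analogous quantity together with the contribution of $\ch_-$. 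So the two things I must compute are the Morse indices $n(\ch_+)$, $n(\ch_-)$ and the sign of the scalar $\tfrac{d}{d\om} M[\phi_\om]$.

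For the Morse indices: $\phi>0$ on $(-L,L)$ and $\ch_-\phi = -\phi\p_x^2(\phi^2) + 2(\om - \phi^{p-2})\phi$; using \eqref{40} one checks $\ch_- \phi = 0$ (since $\phi$ itself solves the profile equation), and because $\phi$ is positive it is the ground state of $\ch_-$, so $n(\ch_-) = 0$ with a simple kernel. For $\ch_+$: differentiating \eqref{40} in $x$ shows $\ch_+ \phi' = 0$, and $\phi'$ has exactly one sign change on $(-L,L)$ (it is $\le 0$ on $(0,L)$, $\ge 0$ on $(-L,0)$), hence by Sturm oscillation $\phi'$ is the second eigenfunction and $n(\ch_+) = 1$. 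Then the whole dichotomy is driven by the sign of $\tfrac{d}{d\om}M[\phi_\om]$, which I would compute directly from the explicit scaling: from \eqref{k:129} or \eqref{pol:10} one gets $M[\phi_\om] = \int_{-L}^L \phi_\om^2 = c(p)\,\om^{\alpha(p)}$ for an explicit exponent $\alpha(p)$; I expect $\alpha(p)$ to change sign precisely at $p=8$ (this is the $p=8$ threshold, mirroring the $p=6$ threshold for classical KdV/NLS, the shift by $2$ coming from the degenerate nonlinearity $u\p_x(u\p_x u)$ scaling differently from $u_{xxx}$), so $\tfrac{d}{d\om}M[\phi_\om] > 0$ iff $p < 8$, with the borderline $p=8$ requiring the usual separate (algebraic-eigenvalue / kernel-of-$D$) analysis. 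Feeding $n(\ch_+) = 1$, $n(\ch_-) = 0$ and the sign of $D$ into the Hamiltonian-Krein index formula gives: zero unstable eigenvalues iff $2 < p \le 8$ for both models, and a single real unstable mode for $p>8$.

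The main obstacle, and the place I would be most careful, is making the index-counting theorem actually applicable here: the operator $\ch_\pm f = -\phi\,\p_x^2(\phi f) + (\dots) f$ is degenerate (its leading coefficient $\phi^2$ vanishes at $\pm L$), it is only defined on the compact support, and — as the excerpt itself flags — it admits many self-adjoint extensions. So I would need to (a) pin down a natural extension (e.g. the Friedrichs extension of the form $q_\pm$, or equivalently work on $[-L,L]$ with the boundary behavior forced by finite energy), (b) verify that $\cj = \p_x$ interacts correctly with that domain so that the eigenvalue problem \eqref{Lin} is well-posed and the symplectic/instability-index machinery (à la Grillakis–Shatah–Strauss, or Kapitula–Kevrekidis–Sandstede / Lin–Zeng) still yields the stated count despite the degeneracy, and (c) confirm that the continuous spectrum does not obstruct — here I would argue that on the bounded support with the chosen extension $\ch_\pm$ has compact resolvent, so the spectrum is discrete and the finite count is exactly the full story. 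Handling the degenerate endpoints rigorously, and showing the extension choice does not affect the conclusion (as Definition \ref{defi:ss} demands), is the delicate technical core; everything else is either quoted (Proposition \ref{prop:12}) or a routine computation with the explicit profile.
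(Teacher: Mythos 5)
Your overall architecture matches the paper's: Proposition \ref{prop:12} for existence/uniqueness, a rearrangement-based variational argument for the minimizer (the paper substitutes $v=u^2$ and minimizes $N_0[v]=\f14\int|v'|^2+\om\int|v|$, which sidesteps concentration-compactness entirely because bell-shaped minimizing sequences have automatic pointwise decay and hence Kolmogorov--Rellich compactness), and the Lin--Zeng index count with $n(\ch_+)=1$, $n(\ch_-)=0$ and $D=-\f12\p_\om\int\phi^2\propto(8-p)$. The threshold computation is right: $\int\phi^2\propto\om^{\f{3}{p-2}-\f12}$ and the exponent vanishes at $p=8$.

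However, there is a genuine gap in your treatment of the degenerate operators, and one of your claims is false. You assert that on the bounded support $\ch_\pm$ has compact resolvent, so the spectrum is discrete. It does not: the change of variables $t(x)=\int_0^x dy/\Phi(y)$ (Lemma \ref{le:op} in the paper) gives a \emph{unitary} equivalence between $\cl_+$ on $L^2(-L,L)$ and the genuine Schr\"odinger operator $L_+=-\p_t^2+\f{\om}{4}-W(t)$ on $L^2(\rone)$ with $W$ exponentially decaying, whose essential spectrum is $[\om/4,\infty)$. The degeneracy of $\phi^2$ at $\pm L$ manufactures essential spectrum despite the compact support. This does not obstruct the index theory (the essential spectrum is positive and bounded away from $0$), but your route to discreteness fails, and what actually has to be proved is the gap condition \eqref{215} --- that $0$ is isolated with kernel exactly $\mathrm{span}[\Phi']$ --- which the paper establishes for an \emph{arbitrary} self-adjoint extension via a compactness argument hinging on Hardy's inequality applied to $g_n=\Phi f_n$. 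Relatedly, your Sturm-oscillation argument for $n(\ch_+)=1$ ($\phi'$ has one zero, hence is the second eigenfunction) and your Perron--Frobenius argument for $n(\ch_-)=0$ ($\phi>0$ is the ground state) are applied directly to the degenerate operators, where these classical facts are not available off the shelf; the paper instead gets $n(\cl_+)\le 1$ from the second-variation inequality $\cl_+|_{\{\Phi^{p-1}\}^\perp}\ge 0$ at the constrained minimizer, gets $n(\cl_+)\ge 1$ from the explicit Pohozaev-type computation $\dpr{\cl_+\Phi}{\Phi}=-\om\f{3p^2-10p+8}{p+4}\int\Phi^2<0$, proves $\cl_-\ge\cl_+$ by a pointwise comparison of potentials, and only then transfers the simplicity/ground-state statements to $L_\pm$ where Sturm--Liouville theory legitimately applies. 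The missing idea in your proposal is precisely this Liouville-type transformation; without it, the ``delicate technical core'' you flag at the end remains unresolved rather than merely unwritten.
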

\noindent {\bf Remark:} 
\begin{enumerate}
	\item Recall the issue with possible different self-adjoint extensions of the operators $\ch_\pm$. In particular,   the spectral instability results stated above,  should be interpreted as follows - for 
	every self-adjoint extension of  $\ch_\pm$, there is a non-trivial solution $(\la, v)$ of the eigenvalue problem \eqref{Lin} (\eqref{eig2} respectively). 
	\item  For the values $2<p<8$, the solution $\phi$ may be generated as a normalized wave. That is, as the minimizer of the following constrained variational problems
	$$ 
	\left\{\begin{array}{l} 
		N[u]=\f{1}{4} \int_{\rone} |u \p_x u|^2   - \f{1}{p} \int |u|^p\to \min  \\
		\textup{subject to} \  \int |u|^2 dx=\la.
	\end{array}   \right. 
	$$
	Note that this constrained variational problem also has unique solution, for each $\la>0, p>2$. Then, it can be proved that there is a one-to-one correspondence  $\om=\om(\la):\rone_+\to \rone_+$, which is actually an increasing function. 
\end{enumerate}

  \section{Preliminaries} 
 
  \subsection{Some basics} 
   We  use the standard expressions for $\|\cdot\|_{L^p(\rone)}, 1\leq p\leq \infty$ as well as the following expression for the Fourier transform and its inverse 
   $$
   \hat{f}(\xi)=\int\limits_{\rone} f(x) e^{-2\pi i x \xi} dx, \ \ \ f(x)  =\int\limits_{\rone}  \hat{f}(\xi) e^{2\pi i x  \xi} d\xi. 
   $$
  Naturally, the Laplacian may be defined through its symbol, that is $\widehat{-\De f}(\xi)=4\pi^2 |\xi|^2 \hat{f}(\xi)$. 
   Homogeneous and non-homogeneous Sobolev spaces are defined via 
   $$
   \|f\|_{\dot{W}^{s,p}}=\|(-\De)^{\f{s}{2}} f\|_{L^p},  \|f\|_{W^{s,p}}= \|(-\De)^{\f{s}{2}} f\|_{L^p}+\|f\|_{L^p}. 
   $$
   The Sobolev embedding will be useful in the sequel, so we state it here - for each $1<r<q<\infty$, 
   $$
   \|f\|_{L^q}\leq C_{r,q} \|f\|_{\dot{W}^{\f{1}{r}-\f{1}{q},r}}.
   $$
  \subsection{Rearrangements}
  In this subsection, we discuss the techniques  of rearrangements. For a measurable function $f:\rone\to \rone$, let 
  $$
  d_f(\al):=|\{x\in \rone: |f(x)|>\al\}|
  $$
  be its \emph{distribution function}. For every $\vp\in C^1(\rone)$  with $\vp(0) = 0$, 
  one has  {\it the layer cake representation formula} 
  \begin{equation}\label{150}
  \int_{\rone} \vp(|f(x)|) dx= \int_0^\infty \vp'(\al) d_f(\al)\,\mathrm{d}\al. 
  \end{equation}
  The \emph{non-increasing rearrangement}, $f^*$, of the function $f$ is the inverse function of $d_f$, provided that $\al\to d_f(\al)$ is strictly decreasing. In general, we define  $f^*:\rone_+\to\rone_+$ by
  $$
  f^*(t)=\inf\{s>0: d_f(s)\leq t\}, \qquad t \geq 0. 
  $$ 
  Then $f^*$ is a non-increasing function, and one furthermore has that $d_{f^*}(\al)=d_f(\al)$.  
  Let $f^\#(t)=f^*(2|t|)$, $t \in \rone$.  Then $f^\#$ provides us with a convenient way of characterizing bell-shapedness, namely, we say that $f$ is bell-shaped if and only if $f^\# =f$. Note that in particular, bell-shaped functions are non-negative. 
 
  \begin{lemma}(Polya-Szeg\"o inequality)
  	\label{ps} 
  	
  	For   $f\in H^1(\rone)$, we have  that $f^\# \in H^1(\rone)$ and in fact, 
  	\begin{equation}
  	\label{eq:ps} 
  	\|f'\|_{L^2(\rone)} \geq \| \p_x  f^\# \|_{L^2(\rone)}.
  	\end{equation}
  \end{lemma}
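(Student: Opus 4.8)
The plan is to establish \eqref{eq:ps} first for a dense class of well-behaved profiles via the one–dimensional coarea formula, and then pass to the limit. Since $|f|\in H^1(\rone)$ with $\||f|'\|_{L^2(\rone)}=\|f'\|_{L^2(\rone)}$ and $|f|^\#=f^\#$, we may assume $f\ge 0$. It is a classical fact that the symmetric decreasing rearrangement $g\mapsto g^\#$ is non-expansive on $L^2(\rone)$, i.e. $\|g^\#-h^\#\|_{L^2(\rone)}\le \|g-h\|_{L^2(\rone)}$; combined with the density of smooth, non-negative, compactly supported functions in $H^1(\rone)$ and a small generic perturbation (Sard's theorem) to make them Morse functions with critical values all distinct and nonzero, it suffices to prove \eqref{eq:ps} for such $f$. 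Indeed, for general $f$ one picks $f_n\to f$ in $H^1(\rone)$ of this type; then $f_n^\#\to f^\#$ in $L^2(\rone)$, hence in the sense of distributions, while the inequality for the $f_n$ gives the uniform bound $\|\p_x f_n^\#\|_{L^2(\rone)}\le \|f_n'\|_{L^2(\rone)}$, so $(\p_x f_n^\#)$ has a weak $L^2$ limit which must be $\p_x f^\#$, and lower semicontinuity of the norm yields $f^\#\in H^1(\rone)$ with $\|\p_x f^\#\|_{L^2(\rone)}\le \|f'\|_{L^2(\rone)}$.

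For $f$ as above, almost every $\al\in(0,\max f)$ is a regular value, so $\{x: f(x)=\al\}$ is a finite set of transversal crossings (where $f'\neq 0$) and $\{x: f(x)>\al\}$ is a nonempty bounded open set, hence a finite union of intervals; in particular its number of boundary points $N(\al):=\#\{x: f(x)=\al\}\ge 2$. The coarea formula gives $-\tfrac{d}{d\al}d_f(\al)=\sum_{f(x)=\al}|f'(x)|^{-1}$ and $\int_{\rone}|f'(x)|^2\,dx=\int_0^{\max f}\big(\sum_{f(x)=\al}|f'(x)|\big)\,d\al$. Cauchy--Schwarz on the finite sum over $\{f=\al\}$ then yields
$$
\sum_{f(x)=\al}|f'(x)|\ \ge\ \frac{N(\al)^2}{\sum_{f(x)=\al}|f'(x)|^{-1}}\ \ge\ \frac{4}{-\tfrac{d}{d\al}d_f(\al)}.
$$
For $g=f^\#$ the same identities hold with $d_g=d_f$, but now $N_g(\al)\equiv 2$ and, by evenness, the two preimages carry equal slope, so Cauchy--Schwarz is an equality: $\sum_{g(x)=\al}|g'(x)|=4\big/\big(-\tfrac{d}{d\al}d_f(\al)\big)$. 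Integrating in $\al$ and comparing gives $\|f'\|_{L^2(\rone)}^2\ge \int_0^{\max f}\frac{4}{-\tfrac{d}{d\al}d_f(\al)}\,d\al=\|\p_x f^\#\|_{L^2(\rone)}^2$, which is \eqref{eq:ps}.

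The analytic heart is the reduction in the first paragraph: one must know the Morse, compactly supported approximants can be taken to converge to $f$ in $H^1(\rone)$ and that the rearrangements converge strongly enough in $L^2$ to identify the weak limit of $(\p_x f_n^\#)$; the $L^2$ non-expansiveness of $\#$ makes this routine. The coarea/Cauchy--Schwarz step is soft once one is in the Morse setting; the only point requiring a remark is the case where $\{f>0\}$ --- equivalently the support of $f^\#$ --- is unbounded, in which case $f^\#$ is a positive absolutely continuous function on all of $\rone$ and the same integral identity applies verbatim.
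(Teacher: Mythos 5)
The paper offers no proof of Lemma \ref{ps}; it is quoted as a classical fact, so there is nothing of the authors' to compare against. Your argument is a correct, self-contained rendition of the standard one-dimensional level-set proof: reduce to $f\ge 0$, establish \eqref{eq:ps} for nice compactly supported approximants by combining the coarea formula with Cauchy--Schwarz on each level set (a regular superlevel set has at least two boundary points, while the symmetrized one has exactly two, carrying equal slopes, so Cauchy--Schwarz is saturated), and pass to the limit using the $L^2$-contractivity of rearrangement together with weak lower semicontinuity of the norm. The limiting step is set up correctly: the uniform bound produces a weak $L^2$ limit of $\p_x f_n^\#$, which the strong convergence $f_n^\#\to f^\#$ identifies as $\p_x f^\#$. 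This buys the paper an actual proof where it currently has only a citation.

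Two technical points should be repaired, though neither is a genuine gap. First, a compactly supported function is never Morse on all of $\rone$ with all critical values nonzero, since it vanishes identically off its support; what you actually need, and can arrange, is that the critical points inside $\{f_n>0\}$ are nondegenerate (equivalently, that $\{f_n'=0\}\cap\{f_n>0\}$ is finite), so that $d_{f_n}$ is locally absolutely continuous on $(0,\max f_n)$ and $-\tfrac{d}{d\al}d_{f_n}(\al)=\sum_{f_n=\al}|f_n'|^{-1}$ carries no hidden singular part --- Sard alone controls only the image of the critical set, not its measure, and a plateau would otherwise spoil the identification of $\p_x f_n^\#$ through the inverse of $d_{f_n}$. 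Second, the generic perturbation that achieves this may push the approximant slightly negative; this is harmless because both sides of \eqref{eq:ps} depend only on $|f_n|$ (as you note at the outset for $f$ itself), but it should be said explicitly since you phrased the reduction as producing non-negative approximants. With those two remarks added, the proof is complete.
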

  The next result is the  Hardy's inequality.
  \begin{lemma}
  	\label{Hardy} 
  	Let $a<b$ and $f\in H^1(\rone)$, so that $f(a)=0$. Then, 
  	\begin{equation}
  	\label{har} 
  		\int_a^b \frac{|f(x)|^2}{|x-a|^2} dx\leq 	\int_a^b |f'(x)|^2 dx. 
  	\end{equation}
  \end{lemma}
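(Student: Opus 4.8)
The plan is to establish \eqref{har} by the classical integration-by-parts proof of Hardy's inequality, after reducing to a computation on a half-line. First I would translate so that $a=0$ and set $c:=b-a$, so the claim becomes $\int_0^{c}\frac{|f(x)|^2}{x^2}\,dx\le \int_0^{c}|f'(x)|^2\,dx$ for $f\in H^1$ with $f(0)=0$. Because $H^1(\rone)$ embeds continuously into $C(\rone)$, the pointwise condition $f(0)=0$ is meaningful and preserved under $H^1$-limits, so it suffices to prove the estimate for $f\in C^1$ with $f(0)=0$ and then pass to the general case: for an approximating sequence $f_n\to f$ in $H^1$ with $f_n(0)=0$, the right-hand side converges, while Fatou's lemma handles the (possibly singular) left-hand side.

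The key preliminary estimate comes from the fundamental theorem of calculus: since $f(0)=0$ we have $f(x)=\int_0^{x} f'(t)\,dt$, and Cauchy–Schwarz gives $|f(x)|^2\le x\int_0^{x}|f'(t)|^2\,dt$. Consequently $|f(x)|^2/x\to 0$ as $x\to 0^+$, which is exactly the decay needed to discard the contribution at the singular endpoint. I would then integrate the left-hand side by parts, writing $x^{-2}\,dx=-\,d(x^{-1})$ and using $\frac{d}{dx}|f|^2=2\,\Re(\overline{f}\,f')$ to cover complex-valued $f$; this produces the identity $\int_0^{c}\frac{|f|^2}{x^2}\,dx=-\frac{|f(c)|^2}{c}+2\int_0^{c}\frac{\Re(\overline{f}\,f')}{x}\,dx$, in which the endpoint term at $c$ is $\le 0$ and the endpoint term at $0$ vanishes by the decay just established.

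It remains to handle the cross term. Applying Cauchy–Schwarz, $\big|\int_0^{c}\overline{f}\,f'\,x^{-1}\,dx\big|\le A^{1/2}B^{1/2}$, where $A:=\int_0^{c}|f|^2x^{-2}\,dx$ and $B:=\int_0^{c}|f'|^2\,dx$, reduces the whole matter to the scalar inequality $A\le 2A^{1/2}B^{1/2}$, and hence to the control of $A$ by $B$ recorded in \eqref{har}. A cleaner, constant-tracking variant, which I would likely present instead, is to expand the nonnegative quantity $\int_0^{c}\big|f'-\beta\,x^{-1}f\big|^2\,dx\ge 0$, substitute the same integration-by-parts identity, and optimize over the real parameter $\beta$; this route yields the Hardy estimate directly and makes the dependence of the admissible constant on $\beta$ fully transparent.

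The one genuine obstacle is the boundary analysis at the singular endpoint $x=a$: one must verify the decay $|f(x)|^2/(x-a)\to 0$ there and justify integrating by parts across the $x^{-2}$ singularity. Both are furnished by the hypothesis $f(a)=0$ together with the Cauchy–Schwarz decay bound above; the favorable sign of the endpoint term at $b$ is what lets one discard it rather than estimate it. Everything else is the elementary scalar optimization, so I expect no further difficulty beyond this endpoint bookkeeping.
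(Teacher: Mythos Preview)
Your argument is sound and in fact more complete than what the paper offers. The paper does not prove the lemma at all: its ``proof'' is a remark that the inequality on $[a,b]$ follows from the classical Hardy inequality on $\rone$ by approximating to reach the half-line version $\int_0^b |g|^2/x^2\,dx\le C\int_0^b|g'|^2\,dx$ for $g(0)=0$, and then translating $g(x)=f(x+a)$. You carry out exactly this reduction in your first paragraph and then go further, supplying the actual integration-by-parts proof of the half-line inequality together with the endpoint analysis needed to justify it. So your route strictly contains the paper's.

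One point worth flagging: your scheme yields $A\le 2A^{1/2}B^{1/2}$ and hence $A\le 4B$, not $A\le B$; the optimization variant with $\int_0^c|f'-\beta x^{-1}f|^2\ge 0$ likewise gives the best constant $4$ at $\beta=\tfrac12$. This is not a defect of your argument---the constant $4$ is sharp (test with $f(x)=x^{1/2+\eps}$ on $(0,c)$), so the lemma as printed, and the ``classical statement'' the paper invokes, are in fact misstated with constant $1$. For the paper's only use of the lemma (inside the proof of Proposition~\ref{prop:45}) any finite constant suffices, so this does not affect the results; but you should not claim that your computation recovers \eqref{har} with constant $1$.
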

  {\bf Remark:} This is a slightly more general version of the classical statement 
  $$
  \int_{-\infty}^\infty \f{|f(x)|^2}{|x|^2} dx\leq \int_{-\infty}^\infty |f'(x)|^2 dx.
  $$
   for function $f: f(0)=0$. But it is clear that from the classical version, 
   one obtains by an  approximation argument that 
   $$
    \int_{0}^b \f{|g(x)|^2}{|x|^2} dx\leq \int_{0}^b |g'(x)|^2 dx
    $$
     for every $g: g(0)=0$, $b>0$. It is then clear that the formulation \eqref{har} reduces to this form, by taking $g: g(x)=f(x+a)$. 
  \subsection{The basics of the Hamiltonian index theory} 
  \label{sec:2.3}
  We follow the setup described in \cite{LZ}, but the earlier versions of these results, \cite{KKS, KKS2, KP, Pel}  provided much impetus in the developments of these methods. 
  
  Consider the Hamiltonian eigenvalue problem of the form
  \begin{equation}
  \label{gHam}
  \mathcal{J}\mathcal{H}g=\la g, u\in H
  \end{equation}
  where $H$ is a Hilbert space,  
  $\mathcal{J}:\mathcal{D}(\mathcal{J})\subset H^*\to H$, 
  and $$\mathcal{H}:H\to H^*$$ We will give a brief introduction of the analysis of the number of unstable eigenvalues of \eqref{gHam}. Assume that $\langle \mathcal{H}u,v\rangle$ is bounded symmetric bilinear form on $H\times H$, which gives rise to a self-adjoint operator $(\ch, D(\ch))$ Moreover, there exist a decomposition of $H$ in $\ch$ invariant 
  $$
  H=H_-\oplus ker\mathcal{H}\oplus H_+
  $$
  where  $H_-, H_+$  are the negative and positive subspaces respectively. More precisely, upon introducing the self-adjoint projections $P_-=\chi_{(-\infty, 0)}(\ch), P_+=\chi_{(0, +\infty)}(\ch)$, we take  $H_-=P_-[H], H_+=P_+[H]$. Assume in addition that the Morse index of $\ch$, that is $n(\ch)=dim(H_-)<\infty$, while there exists $\de>0$, so that $\dpr{\ch u}{u}\geq \de \|u\|^2$, for all $u\in D(\ch)\cap H_+$. For any $\la\in \si_{p.p.}(\cj\ch)$, introduce the generalized eigenspace 
  $$
  E_{gen}^\la:=\cup_{j=1}^\infty E_j^\la, E_\la^j=\{f\in H: (\mathcal{JH}-\la I)^jf=0\}
  $$ 
  Assume also that the dual space $H^*$ satisfies 
  $$
  \{\langle g\in H^*:\langle g,u\rangle=0,\forall u\in H_-\oplus H_+ \rangle\}\subset D(\mathcal{H}). 
  $$
  Further since, $Ker(\ch)\subset E_{gen}^0$, decompose  $E_{gen}^0=ker(\mathcal{H})\oplus E_0$, where $E_0$ is finite dimensional with basis say $\{\psi_i\}_{i=1}^{n}\subset \mathcal{D}(\mathcal{H})$.  Let $D$ be the  matrix with entries 
  $$
  D_{ij}=\langle\mathcal{H}\psi_i,\psi_j\rangle. 
  $$
  Next, we need to introduce the notion of negative Krein signature  of a purely imaginary  eigenvalue. More specifically, for $i\mu\in \si_{p.p.}(\cj\ch), \mu>0$, consider $E_{i \mu}:=Ker(\cj \ch-i \mu), P_{i \mu}: H\to E_{i \mu}$ 
  and $k_i^{i \mu}=n(\ch|_{E_{i \mu}})=n(P_{i\mu} \ch P_{i \mu})$. Finally, we define the total Krein signature 
  $$
  k_i^{\leq 0}=\sum\limits_{\mu\neq 0: i\mu\in \si_{p.p.}(\cj\ch)} k_i^{i \mu}.
  $$
   In the most common case, when $i\mu$ is a simple eigenvalue, with say an eigenvector $\psi_\mu$, $i \mu$ is of negative Krein signature  exactly when the real quantity $\dpr{\ch \psi_\mu}{\psi_\mu}<0$. In such a case $k_i^{i \mu}=1$. 
  
  By \cite{LZ} see also  \cite{KKS2}\cite{KKS} we have the following Hamiltonian-Krein  index formula
  \begin{equation}
  \label{HKIF}
  k_{Ham.}:=k_r+2k_c+2k_i^{\leq 0}=n(\mathcal{H})-n(D)
  \end{equation}
  where $k_r$ is the number of real positive eigenvalues of $\mathcal{JH}$ (counted with their multiplicities),  $k_c$ is the number of eigenvalues of  $\mathcal{JH}$   with positive real and imaginary part, and lastly $k_i^{\leq 0}$ is the total Krein signature introduced above. Note that $k_{Ham}=0$ implies spectral stability for the model \eqref{gHam}, as $k_{Ham}$ counts all the instabilities, since $k_{Ham}\geq k_r+k_c$.  
  
  In the particular case $n(\cl)=1$, the formula \eqref{HKIF} reduces the situation to two cases, namely $k_{Ham}=1$, if $n(D)=0$ and $k_{Ham}=0$, if $n(D)=1$. We have already discussed that $k_{Ham}=0$ is a case of stability. If however $k_{Ham.}=1$, by parity considerations, 
  it follows that $k_r=1$, while $k_c=k_i=0$. In an y event, this implies that the system has a real unstable growing mode. Thus, we have the following useful corollary of \eqref{HKIF}. 
  \begin{corollary}
  	\label{cor:12} 
  	If $n(\cl)=1$, the the eigenvalue problem \eqref{gHam} is spectrally stable if  $n(D)=1$, and it has exactly one real unstable mode, if $n(D)=0$. 
  \end{corollary}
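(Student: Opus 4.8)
The plan is to read off the statement directly from the Hamiltonian–Krein index formula \eqref{HKIF}, specialized to the standing assumption $n(\cl)=n(\ch)=1$ (the symbol $\cl$ in the statement being the self-adjoint operator $\ch$ of the setup). The only genuine input beyond \eqref{HKIF} is the observation that the three summands $k_r$, $k_c$, and $k_i^{\leq 0}$ appearing on its left-hand side are, by their very definitions, nonnegative integers: they count, respectively, the real positive eigenvalues of $\cj\ch$, the eigenvalues in the open first quadrant, and a sum of Krein signatures over the purely imaginary eigenvalues. With this in hand, the argument is purely arithmetic.

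First I would substitute $n(\ch)=1$ into \eqref{HKIF}, so that
$$
k_r+2k_c+2k_i^{\leq 0}=1-n(D),
$$
and split into the two admissible values of $n(D)$. When $n(D)=1$, the right-hand side vanishes; since each term on the left is a nonnegative integer, all three must vanish, and in particular $k_r=k_c=0$. Because every eigenvalue of $\cj\ch$ with $\Re\la>0$ is counted either by $k_r$ (if real) or by $k_c$ (if it sits in the open first quadrant, its Hamiltonian-symmetric partners accounting for the rest of the unstable quadruplet), the vanishing $k_r=k_c=0$ says precisely that \eqref{gHam} has no eigenvalue with $\Re\la>0$. This is spectral stability.

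When $n(D)=0$, the right-hand side equals $1$, and here I would invoke a parity argument, exactly as foreshadowed in the discussion preceding the corollary. The two terms $2k_c$ and $2k_i^{\leq 0}$ are even, so $k_r$ must share the parity of $1$, i.e. $k_r$ is odd and hence $k_r\geq 1$. On the other hand $k_r\leq k_r+2k_c+2k_i^{\leq 0}=1$, forcing $k_r=1$ and then $k_c=k_i^{\leq 0}=0$. Thus $\cj\ch$ has exactly one eigenvalue with $\Re\la>0$, it is real and of multiplicity $k_r=1$, and there are no complex unstable modes; this is the claimed single real unstable mode.

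I do not expect a serious obstacle here: all the analytic weight sits in \eqref{HKIF} itself, which we are entitled to assume from \cite{LZ}. The one point that deserves care is making sure the count of unstable eigenvalues is faithful, i.e. that no eigenvalue with $\Re\la>0$ escapes being recorded in $k_r+k_c$. This is guaranteed by the Hamiltonian symmetry of the spectrum of $\cj\ch$ built into the setup (eigenvalues occur in the symmetric configurations $\{\la,-\la\}$ and $\{\la,\bar\la,-\la,-\bar\la\}$), together with the remark following \eqref{HKIF} that $k_{Ham}\geq k_r+k_c$ and that $k_{Ham}=0$ already forces stability; the parity step is then the only mildly nontrivial ingredient.
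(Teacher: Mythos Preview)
Your proposal is correct and follows essentially the same route as the paper: substitute $n(\ch)=1$ into the index formula \eqref{HKIF}, observe that $k_{Ham}=0$ forces stability when $n(D)=1$, and use the parity argument $k_r\equiv 1\pmod 2$ together with the bound $k_r\leq 1$ to pin down $k_r=1$, $k_c=k_i^{\leq 0}=0$ when $n(D)=0$. Your write-up is simply a more explicit unpacking of the short paragraph preceding the corollary in the paper.
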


  \section{Existence of the compacton waves} 
  We start with the proof of Proposition \ref{prop:12}. 
  \subsection{Proof of Proposition \ref{prop:12}}
  We solve the ODE \eqref{40}. As long as $\phi\neq 0$, say on an interval $[-L,L]$, we may divide by $\phi$, which then leads us to the ODE 
 \begin{equation}
 \label{41} 
 -\f{1}{2} \p_x^2 (\phi^2) + \om - |\phi|^{p-2}=0.
 \end{equation}
 Denoting $Q:=\phi^2\geq 0$, this is equivalent to 
 \begin{equation}
 \label{42} 
 -\f{1}{2} Q'' + \om - Q^{\f{p}{2}-1}=0.
 \end{equation}
 Multiplying the equation by $Q'$ and integrating on the interval $[-L,L]$, and imposing  that $Q'(-L)=Q'(L)=Q(L)=Q(-L)=0$, we reduce the order of the ODE, namely to 
 \begin{equation}
 \label{43} 
 (Q')^2 = 4 \om Q - \f{8}{p}  Q^{\f{p}{2}}. 
 \end{equation}
 If we further require that $Q$ is decreasing on $(0,L)$, i.e. $Q'(r)<0, r>0$, we can finally setup the ODE 
 \begin{equation}
 \label{44} 
 Q' (r)= -2\sqrt{ \om Q(r) - \f{2}{p}  Q^{\f{p}{2}}(r)}, 0<r<L.
 \end{equation}
Note that this immediately implies \eqref{k:129}. 
Clearly, \eqref{44} has an unique solution of the required form, namely with $Q(0)=Q_0=\left(\f{p \om}{2}\right)^{\f{2}{p-2}}, Q'(0)=0$, if we select 
\begin{equation}
	\label{l:10} 
	L=\int_0^{Q_0} \f{1}{2\sqrt{\om Q- \f{2}{p} Q^{\f{p}{2}}}} dQ = \f{p^{\f{1}{p-2}}}{2^{\f{p-1}{p-2}}} \om^{\f{4-p}{2(p-2)}}\int_0^1 \f{1}{\sqrt{z-z^{\f{p}{2}}}} dz.
\end{equation}
Clearly, such solution is bell-shaped as it satisfies \eqref{44}. 

Conversely, assume that $\phi$ is a bell-shaped solution of  \eqref{40}, which vanishes at $\pm \infty$. Then, $Q:=\phi^2$ will satisfy \eqref{42} as argued above, and it will also vanish at $\pm \infty$. We observe now that the support of $Q$ may not be infinite, since then, it must be that 
$$
+\infty=\int_0^{Q(0)} \f{dQ}{\sqrt{\om Q-\f{2}{p}Q^{\f{p}{2}}}}
,$$
which is clearly false as the integral is convergent, due to the mild singularity at both $0$ and $Q(0)$. Thus, $Q$ (and subsequently $\phi$) is supported on a finite interval $[-L,L]$. Then, it becomes clear that since $Q'(L)=0$, it is the case that $Q(0)=Q_0=\left(\f{p \om}{2}\right)^{\f{2}{p-2}}$ and the solution is unique from \eqref{44}. In particular, $L$ is given by the formula displayed in \eqref{l:10}. 

  \subsection{An alternative  variational problem} 
  Looking at the form of the functional $N[u]$,  it is pretty standard to replace $u=\sqrt{v}$, especially since we are looking for positive solutions of \eqref{40}. Specifically, we shall consider 
  \begin{equation}
  \label{34} 
  \left\{\begin{array}{l} 
  N_0[v]=N[\sqrt{v}]=\f{1}{4} \int_{\rone} |v'|^2 +\om \int |v| \to \min  \\
  \textup{subject to} \  \int |v|^{p/2} dx=1.
  \end{array}   \right. 
  \end{equation}
  Clearly, $N_0[v]\geq 0$, so we introduce 
  $$
  m(\om):=\inf_{v\in \cs} J_\om(u)= \inf\limits_{\int |u|^{p} dx=1} N[u] = \inf\limits_{\int |v|^{p/2} dx=1} N_0[v].
  $$
  We now show that \eqref{34} has a solution. 
  \begin{proposition}
  	\label{prop:10} 
  	Let $p>2, \om>0$. Then, the constrained minimization problem \eqref{34} has a bell-shaped solution $\vp=\vp^\#$. 
  \end{proposition}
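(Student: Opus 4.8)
The plan is to prove Proposition \ref{prop:10} by the direct method in the calculus of variations, combined with a symmetrization (rearrangement) argument to obtain a bell-shaped minimizer. First I would set up the problem on the admissible set
$$
\cs_0 := \Big\{ v\in H^1(\rone): v\geq 0, \ \int_\rone v^{p/2}\,dx = 1\Big\},
$$
and observe that $\cs_0$ is non-empty and that $m(\om)=\inf_{v\in\cs_0} N_0[v]\geq 0$ is finite; a rescaled bump function shows $m(\om)<\infty$. One should also note that restricting to $v\geq 0$ is harmless, since $|v|\in H^1$ with $\|\,|v|'\,\|_{L^2}\leq\|v'\|_{L^2}$ and $\int|v|^{p/2}=\int v_+^{p/2}$ up to normalization, so replacing $v$ by $|v|$ does not increase $N_0$ nor violate the constraint. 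The crucial coercivity point is that $N_0[v]=\tfrac14\|v'\|_{L^2}^2+\om\|v\|_{L^1}$ controls both $\|v'\|_{L^2}$ and $\|v\|_{L^1}$, hence (by Gagliardo–Nirenberg / Sobolev embedding $H^1(\rone)\hookrightarrow L^\infty$ together with the $L^1$ bound) it controls $\|v\|_{L^q}$ for every $2\leq q\leq\infty$, in particular $\|v\|_{L^{p/2}}$ for $p>2$ — so a minimizing sequence stays bounded in $H^1(\rone)\cap L^1(\rone)$.

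Next I would run the compactness argument. Take a minimizing sequence $(v_n)\subset\cs_0$; by the previous step it is bounded in $H^1(\rone)$, so along a subsequence $v_n\rightharpoonup \vp$ weakly in $H^1$. To kill the loss of mass at spatial infinity (the only failure mode on the line), I replace each $v_n$ by its symmetric decreasing rearrangement $v_n^\#$. By the Polya–Szegő inequality (Lemma \ref{ps}) $\|\p_x v_n^\#\|_{L^2}\leq\|v_n'\|_{L^2}$, and rearrangement preserves all $L^q$ norms, so $v_n^\#\in\cs_0$ and $N_0[v_n^\#]\leq N_0[v_n]$; thus $(v_n^\#)$ is still minimizing, and we may assume from the start that each $v_n=v_n^\#$ is bell-shaped. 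For bell-shaped functions bounded in $H^1(\rone)$ one has the pointwise bound $v_n(x)^{p/2}\cdot 2|x|\leq \int v_n^{p/2}=1$, i.e. $v_n(x)\leq (2|x|)^{-2/p}$, which is a uniform tail bound; combined with local compactness of $H^1(\rone)\hookrightarrow L^{p/2}_{loc}$ (Rellich) this yields $v_n\to\vp$ strongly in $L^{p/2}(\rone)$, so $\int\vp^{p/2}=1$ and $\vp\in\cs_0$. Since $\vp$ is a decreasing limit of bell-shaped functions, $\vp$ is bell-shaped, i.e. $\vp=\vp^\#$.

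Finally I would pass to the infimum. By weak lower semicontinuity of $v\mapsto\|v'\|_{L^2}^2$ in $H^1$ and Fatou (or weak-$*$ lower semicontinuity of the $L^1$ norm, using $v_n,\vp\geq 0$),
$$
N_0[\vp]=\tfrac14\|\vp'\|_{L^2}^2+\om\|\vp\|_{L^1}\leq\liminf_n\Big(\tfrac14\|v_n'\|_{L^2}^2+\om\|v_n\|_{L^1}\Big)=m(\om),
$$
and since $\vp\in\cs_0$ we also have $N_0[\vp]\geq m(\om)$; hence $\vp$ is a minimizer. This proves the proposition. The main obstacle — the one point deserving genuine care — is the lack of compactness of the Sobolev embedding on the full line: a minimizing sequence can a priori spread its mass out to infinity. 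The rearrangement step is what rescues the argument, so I would be careful to justify that $v_n^\#\in H^1$ with controlled derivative (Polya–Szegő) and that the resulting bell-shaped sequence has the uniform algebraic tail decay that upgrades weak convergence to strong $L^{p/2}$ convergence; a secondary but routine subtlety is handling the non-smooth $L^1$ term $\om\int|v|$, which is fine because it is convex and $v_n\geq 0$, so lower semicontinuity holds.
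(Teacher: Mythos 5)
Your overall strategy is exactly the paper's: reduce to a bell-shaped minimizing sequence via Polya--Szeg\H{o}, use the pointwise decay that bell-shapedness forces to restore compactness on the line, pass to the limit by weak lower semicontinuity of the gradient term and Fatou for the $L^1$ term, and observe that the limit of bell-shaped functions is bell-shaped. The one step that does not work as written is your tail estimate. From the constraint you derive $v_n(x)^{p/2}\cdot 2|x|\le \int v_n^{p/2}=1$, i.e. $v_n(x)^{p/2}\le (2|x|)^{-1}$; but $\int_{|x|>R}|x|^{-1}\,dx=\infty$, so this bound gives no uniform smallness of $\int_{|x|>R}v_n^{p/2}\,dx$ and therefore cannot upgrade the local Rellich convergence to strong convergence in $L^{p/2}(\rone)$ --- which is precisely the point where mass could escape to infinity.

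The repair is immediate and uses an ingredient you already listed: since $N_0$ controls $\int v_n\le m(\om)/\om$, bell-shapedness gives $v_n(x)\cdot 2|x|\le \int v_n$, hence $v_n(x)\le C/(1+|x|)$ for $|x|\ge 1$ (this is the bound the paper uses). Then $v_n(x)^{p/2}\le C|x|^{-p/2}$ with $p/2>1$, which \emph{is} integrable at infinity, so the tails of $\int v_n^{p/2}$ are uniformly small and the strong $L^{p/2}$ convergence follows. In short: derive the decay from the $L^1$ bound, not from the $L^{p/2}$ constraint. (A smaller point: your claim that $N_0$ controls $\|v\|_{L^q}$ for $2\le q\le\infty$ misses the range $2<p<4$ where $p/2<2$, but interpolating between the $L^1$ and $L^\infty$ bounds covers all $q\ge 1$, so nothing is lost.) With that correction your argument is complete and coincides with the paper's proof.
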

  \begin{proof}
  	By the Szeg\"o inequality, $ \int_{\rone} |v'|^2 \geq  \int_{\rone} |\p_x v^\#|^2$, while 
  	$\int |v|=\int |v^\#|, \int |v|^{p/2}=\int |v^\#|^{p/2}$, whence it is clear that it suffices to restrict the problem \eqref{34}to bell-shaped entries $v$. Take a minimizing sequence, $v_n: \|v_n\|_{L^{p/2}}=1$, $v_n\in H^1\cap L^1$, $\lim_n \left(\f{1}{4} \int_{\rone} |v_n'|^2 +\om \int v_n\right) = m(\om)$. It follows that $\sup_n \|v_n'\|_{L^2}< 4m(\om)$, $\sup_n \int v_n \leq m(\om)$. By the bell-shapedness, we conclude the point-wise decay $|v_n(x)|\leq \f{C}{1+|x|}$. Thus, by Kolmogorov-Rellich criteria, the set \\ 
  	$\{v_n: \sup_n \|v_n'\|_{L^2}<\infty, |v_n(x)|\leq \f{C}{1+|x|}\}$ is pre-compact in $L^{p/2}(\rone)$. Without loss of generality, we can assume that $\lim_n \|v_n-\vp\|_{L^{p/2}}=0$ and $v_n'\to \vp'$ weakly. Similarly, fixing a compact subset $K\subset \rone$, the set $\{v_n:   \sup_n \|v_n'\|_{L^2}<\infty, \|v_n\|_{L^{p/2}}=1\}$ is a pre-compact subset in any $C^{\al}(K), \al<\f{1}{2}$. In particular, we can without loss of generality assume in addition that $v_n$ converges uniformly to $\vp$ on the compact subsets of $\rone$. By Fatou's lemma, $\liminf_n \int v_n\geq \int \vp$. Finally, by the lower semi-continuity of the  $L^2$ norm, with respect to the weak topology, $\liminf_n \int (\p_x v_n)^2 \geq \int (\vp')^2$. Putting it all together, we have that $\int \vp^{p/2}=1$, while 
  	$$
  	N_0[\vp]\leq \liminf_n N_0[v_n]=m(\om).
  	$$
  	It follows that $N_0[\vp]=m(\om)$, whence $\vp$ is indeed a constrained minimizer of \eqref{34}. In addition, $\vp$ is clearly bell-shaped (as limit of bell-shaped functions). 

  \end{proof}
  We will eventually establish that the minimizers $\vp$ of \eqref{34} are unique (up to translations) and they also have compact support, whence the reference to ``compactons''. We would like the reader to keep this in mind, as this is somewhat non-standard situation, which develops herein. For this, we need to derive the Euler-Lagrange relation for \eqref{34}.
    \begin{proposition}
    	\label{prop:22} 
    	The solution $\vp$ of \eqref{34} is compactly supported function, which  satisfies the Euler-Lagrange equation 
    	\begin{equation}
    	\label{61} 
    	-\frac12\vp''+\omega-c(\omega,p)\vp^{\frac p2-1}=0, -L<x<L
    	\end{equation}
    	where $c=c(\om,p)$ are explicit. In fact, there is the formula
    	\begin{eqnarray}
    	\label{300} 
    	c(\om,p) = p^{\f{3}{p+1}}  \om^{\f{p+4}{2(p+1)}} \left(2 \int_0^1 \sqrt{z-z^{\f{p}{2}}} dz+ \int_0^1 \f{z}{\sqrt{z-z^{\f{p}{2}}}} dz\right)^{\f{p-2}{p+1}}. 
    	\end{eqnarray}
    	Finally, there are the following formulas for the behavior of $\vp$ at $\pm L$
    	\begin{equation}
    	\label{712} 
    	 \vp(x)= \om (L-x)^2+O((L-x)^3),  \ \  \vp(x)= \om (x+L)^2+O((x+L)^3).
    	\end{equation}
    \end{proposition}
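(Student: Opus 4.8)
The plan is to obtain the Euler--Lagrange equation on the set where $\vp>0$, then upgrade the ODE information to compact support and endpoint asymptotics, and finally compute the multiplier $c(\om,p)$ from the normalization. \textbf{Step 1 (Euler--Lagrange equation).} Since $\vp$ is bell-shaped (Proposition \ref{prop:10}) and $\int\vp^{p/2}=1$, its positivity set is an open interval $I=(-L,L)$ with $L\in(0,+\infty]$, and on $I$ one has $|v|=v$ for competitors close to $\vp$, so $N_0$ and the constraint are smooth along variations there and the Lagrange multiplier rule yields $-\tfrac12\vp''+\om=\mu\tfrac p2\vp^{p/2-1}$ on $I$ for some $\mu\in\rone$; this is exactly \eqref{61} with $c:=\mu p/2$, and elliptic bootstrapping makes $\vp$ a classical (smooth) solution on $I$. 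Pairing \eqref{61} with $\vp$ (the boundary contribution $[\vp'\vp]$ vanishes since $\vp\to0$ at the ends of $I$) and using $\int\vp^{p/2}=1$ gives $c=\tfrac12\|\vp'\|_{L^2}^2+\om\|\vp\|_{L^1}>0$. \textbf{Step 2 (compact support).} If $L=+\infty$, then $\vp>0$ solves \eqref{61} on all of $\rone$; being bell-shaped and in $H^1\cap L^1$, $\vp(x)\to0$ as $x\to+\infty$, hence $\vp''(x)=2\om-2c\,\vp(x)^{p/2-1}\to2\om>0$, which forces $\vp'(x)\to+\infty$ and then $\vp(x)\to+\infty$ --- impossible. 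So $L<\infty$, $\mathrm{supp}\,\vp=[-L,L]$ and $\vp(\pm L)=0$. (Once \eqref{61} is available this is the same ODE dichotomy already used in the converse part of Proposition \ref{prop:12}.)

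\textbf{Step 3 (first integral; behaviour at $\pm L$).} Multiplying \eqref{61} by $\vp'$ and integrating on $(-L,L)$ produces the conserved quantity $E:=-\tfrac14(\vp')^2+\om\vp-\tfrac{2c}p\vp^{p/2}$. As $x\to L^-$ we have $\vp(x)\to0$, hence $\vp''(x)\to2\om$, so $\vp'$ extends continuously up to $L$ with $\vp'(L^-)=:-s\le0$ and $E=-\tfrac14 s^2$. If $s>0$, then near $L$ one has $\vp(x)\approx s(L-x)$, and replacing $\vp$ on $[L-\delta,L]$ by the linear function joining $(L-\delta,\vp(L-\delta))$ to $(L+\delta,0)$ (extended by $0$) defines an admissible competitor that lowers the Dirichlet part of $N_0$ by $\sim s^2\delta$ while changing the $\om\|\cdot\|_{L^1}$ term and the constraint only by $O(\delta^2)$; since $p>2$, restoring the constraint by a rescaling still leaves $N_0$ strictly smaller for small $\delta$, contradicting minimality. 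Hence $s=0$, $E=0$, and $(\vp')^2=4\om\vp-\tfrac{8c}p\vp^{p/2}$ on $(-L,L)$. In particular $\vp'=-\sqrt{4\om\vp-\tfrac{8c}p\vp^{p/2}}$ on $(0,L)$, the amplitude $\vp_0:=\vp(0)$ is determined by $\vp_0^{(p-2)/2}=\tfrac{p\om}{2c}$, and the Taylor expansion at $\pm L$, using $\vp''(\pm L)=2\om$ and $\vp(\pm L)=\vp'(\pm L)=0$, gives \eqref{712}.

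\textbf{Step 4 (the explicit constant).} With the first-order ODE in hand, substitute $\vp=\vp_0 z$; since $4\om\vp-\tfrac{8c}p\vp^{p/2}=4\om\vp_0\,(z-z^{p/2})$, the normalization becomes
\begin{equation*}
1=2\int_0^{\vp_0}\frac{\vp^{p/2}\,d\vp}{\sqrt{4\om\vp-\frac{8c}{p}\vp^{p/2}}}=\vp_0^{(p+1)/2}\,\om^{-1/2}\int_0^1\frac{z^{p/2}\,dz}{\sqrt{z-z^{p/2}}}.
\end{equation*}
Solving this for $\vp_0$ and inserting into $\vp_0^{(p-2)/2}=\tfrac{p\om}{2c}$ expresses $c$ as an explicit power of $\om$ times a power of that Euler--Beta integral; a single integration by parts gives the identity $(p+4)\int_0^1\sqrt{z-z^{p/2}}\,dz=(p-2)\int_0^1 z\,(z-z^{p/2})^{-1/2}\,dz$, which lets one rewrite everything through the combination $2\int_0^1\sqrt{z-z^{p/2}}\,dz+\int_0^1 z\,(z-z^{p/2})^{-1/2}\,dz$ and arrive at \eqref{300}.

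\textbf{Main obstacle.} The only non-mechanical point is the vanishing of the first-integral constant $E$ --- equivalently $\vp'(\pm L)=0$ --- in Step 3: because of the non-differentiable term $\om\int|v|$ together with the sign constraint $v\ge0$, $\vp$ solves a genuine free-boundary (obstacle) problem, so a transversal crossing of zero is not excluded by the interior ODE alone and one must invoke minimality through the competitor described above. Once $E=0$ is secured, the first-order ODE, the asymptotics \eqref{712}, and the Beta-integral computation leading to \eqref{300} are all elementary.
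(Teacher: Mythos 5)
Your proof is correct and follows the same overall strategy as the paper's: derive \eqref{61} from interior variations supported in $\{\vp>0\}$, identify $c=\tfrac12\int(\vp')^2+\om\int\vp$, use the resulting ODE to obtain compact support and the first integral, and extract \eqref{300} from the algebra relating $c$, $\vp_0$ and a Beta-type integral. Three sub-steps differ. (i) For compact support you argue that $\vp''\to 2\om>0$ forces $\vp'\to+\infty$ and hence $\vp\to+\infty$; the paper instead derives the first-order ODE on $\rone$ and notes that the ``time'' integral $\int_0^{\vp_0}d\vp/\sqrt{\om\vp-\tfrac{2c}{p}\vp^{p/2}}$ converges, so the profile cannot take infinitely long to descend from $\vp_0$ to $0$. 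Both work; yours has the advantage of not needing the first integral before compactness of the support is known. (ii) You isolate the vanishing of the first-integral constant, i.e.\ $\vp'(\pm L)=0$, as the genuine free-boundary issue and justify it by a cut-and-rescale competitor (a gain of order $s^2\delta$ in the Dirichlet term against $O(\delta^2)+O(\delta^{p/2+1})$ losses in the $L^1$ term and the constraint); the paper simply asserts $\vp(\pm L)=\vp'(\pm L)=0$ ``from bell-shapedness and \eqref{61}'' without argument, so on this one non-mechanical point your treatment is more careful, and the competitor estimate is sound. (iii) For the constant you use the normalization $1=\int\vp^{p/2}$ together with $\vp_0^{(p-2)/2}=p\om/(2c)$, whereas the paper evaluates $c=\int_0^L(\vp')^2+2\om\int_0^L\vp$ by the same change of variables; your integration-by-parts identity $(p+4)\int_0^1\sqrt{z-z^{p/2}}\,dz=(p-2)\int_0^1 z\,(z-z^{p/2})^{-1/2}\,dz$ is correct and shows the two routes give the same answer. (Note that both derivations actually produce the prefactor $(p/2)^{3/(p+1)}$ rather than the $p^{3/(p+1)}$ displayed in \eqref{300}; that small discrepancy lies in the paper's stated formula, not in your argument.)
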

Before we proceed with the proof of Proposition \ref{prop:22}, we would like to make some important remarks. 
\begin{enumerate}
	\item Similar to \eqref{300}, one may compute various quantities involving norms of $\vp$, as well as the 
	half-period $L$ etc. This is despite the lack of explicit formulas for the function $\vp$. 
	\item It is pretty clear that once $c$ is given by a formula like \eqref{300}, the uniqueness results from Proposition \ref{prop:12} apply here as well. In particular, the variational problem \eqref{34} 
	has an unique bell-shaped solution $\vp$. 
\end{enumerate}
    \begin{proof}(Proposition \ref{prop:22})
    	
    	 Let  $\eps: |\eps|<<1$ and consider a test function $h$, so that  it vanishes  outside the support of $\vp$.  Let us look at  a perturbation $\vp+\eps h$. Note that due to the constraint in \eqref{34} and the support property of $h$, namely $supp \ h\subset supp \ \vp$, we have  the formula 
    	 $$
    	 \|\vp+\eps h\|_{L^{\f{p}{2}}} =1 + \eps \dpr{\vp^{\f{p}{2}-1}}{h}+O(\eps^2).
    	 $$
    	   From the minimization property   of $\vp>0$, we must have that the scalar function 
    	 \begin{eqnarray*}
    	 g(\eps):&=& N_0\left[\f{\vp+\eps h}{\|\vp+\eps h\|_{L^{\frac{p}{2}}}}\right]=\frac14\int_{\mathbb{R}}\left(\frac{\vp'+\epsilon h'}{\|\vp+\epsilon h\|_{L^{\frac{p}{2}}}}\right)^2dx+ \om \int_{\rone} \f{(\vp+\eps h)}{\|\vp+\eps h\|_{L^{\f{p}{2}}}} dx  = \\
    	 &=&  \frac14\int_{\rone} (\vp'+\epsilon h')^2 dx (1-2\eps\dpr{\vp^{\f{p}{2}-1}}{h} ) +\om \int_{\rone} (\vp+\eps h)dx (1-\eps\dpr{\vp^{\f{p}{2}-1}}{h} )+O(\eps^2)\\
    	 &=& g(0)+ \eps \left( \f{1}{2} \dpr{-\vp''}{h}-\dpr{\vp^{\f{p}{2}-1}}{h} (\f{1}{2} 
    	 \int_{\rone} (\vp')^2 dx+\om \int \vp) +\om \dpr{h}{1}\right)+O(\eps^2). 
    	 \end{eqnarray*}
    \end{proof}
    achieves its minimum at zero. It follows that $g'(0)=0$, whence for all test functions $h$
    $$
    \dpr{-\f{1}{2}\vp''+\om - c \vp^{\f{p}{2}-1}}{h}=0,
    $$
    where we have denoted $c=c(\om,p):=\frac 12\int (\vp')^2 +\om \int \vp$. 
    It follows that on the support of $\vp$, the Euler-Lagrange equation \eqref{61} holds true. As $\vp$ is bell-shaped, this must be an interval of the form $[-L,L]$ for some $L>0$ or $\rone$. 
    
   We show now that such minimizer $\vp$ must be compactly supported. 
   Indeed, suppose that $supp \ \vp$ is $\rone$. Multiplying \eqref{61} by $\vp'$ and integrating once (and taking into account that $\vp, \vp'$ vanish at $\pm \infty$ and $\vp'(x)<0, x>0$, due to bell-shapedness),  we obtain the relation 
   \begin{equation}
   \label{62} 
   	\vp'(x)= -2\sqrt{\om \vp(x) - \f{2 c }{p}\vp^{\f{p}{2}}(x)}, x>0.
   \end{equation}
    Now, clearly as the function $\vp$ achieves its maximum at zero, we have that $\vp_0$ is a   zero of the function $ z\to \om  - \f{2 c }{p}z^{\f{p}{2}-1}$, so $\vp_0=\vp(0)$ satisfies 
    \begin{equation}
    \label{64} 
    \vp_0^{\f{p}{2}-1}=\f{p \om }{2c(\om,p) }.
    \end{equation}
    If the relation \eqref{62} holds for a function $\vp$ supported on $\rone$, it must be that  
    $$
    \infty= \int_0^{\vp_0} \f{d\vp}{2\sqrt{\om \vp - \f{2 c}{p}\vp^{\f{p}{2}}}}. 
    $$
    This  is however  clearly false, as the integral in question is convergent, due to the mild singularities at $0, \vp_0$. So, $supp \ \vp=[-L,L]$, and in fact, using the relation \eqref{64} yields 
    $$
    L=\int_0^{\vp_0} \f{d\vp}{2\sqrt{\om \vp - \f{2 c(\om)}{p}\vp^{\f{p}{2}}}} = \f{\sqrt{\vp_0}}{2\sqrt{\om}} \int_0^1 \f{dz}{\sqrt{z-z^{p/2}}}.
    $$
   We now  compute explicitly $c(\om,p)$. We have 
   \begin{eqnarray*}
   c(\om,p) &=& \frac 12\int (\vp')^2 +\om \int \vp = \int_0^L (\vp')^2 +2 \om \int_0^L \vp = \\
   &=&  2\int_0^{\vp_0} \sqrt{\om \vp - \f{2 c}{p}\vp^{\f{p}{2}}} d\vp+\om \int_0^{\vp_0} \f{\vp}{\sqrt{\om \vp - \f{2 c}{p}\vp^{\f{p}{2}}}} d\vp=\\
   &=& \vp_0^{\f{3}{2}} \sqrt{\om} \left(2 \int_0^1 \sqrt{z-z^{p/2}} dz+ \int_0^1 \f{z}{\sqrt{z-z^{p/2}}} dz\right).
   \end{eqnarray*}
    Together with \eqref{64}, this yields a system of two relations for the unknowns $\vp_0(\om,p), c(\om,p)$, which results in the formula \eqref{300}. 
   
    Finally, we discuss the behavior of $\vp$ in a proximity of the endpoints $\pm L$. From bell-shapedness and \eqref{61}, we obtain that $\vp(\pm L)=\vp'(\pm L)=0$, whereas $\vp''(\pm L)=2\om>0$. Thus, \eqref{712} holds true.

    \subsection{Spectral theory  for the linearized operator $\cl_+$}
    
    Clearly, by the equivalence  between the constrained minimization problems \eqref{30} and \eqref{34}, we have that \eqref{30} has a solution $\Phi:=\sqrt{\vp}$, which is also bell-shaped. We establish further properties of $\Phi$.
    \begin{proposition}
    	\label{prop:23} 
    	The solution $\Phi$ of \eqref{30}, satisfies the Euler-Lagrange equation 
    	\begin{equation}
    	\label{60} 
    	-\Phi\p_x(\Phi \Phi')+\om \Phi - c(\om) \Phi^{p-1}=0, \ \ -L<x<L.
    	\end{equation}
    	In addition, consider the symmetric operator 
    	\begin{equation}
    	\label{L+}
    	\cl_+f =  -\Phi\p_x^2 (\Phi f) +\om f  - (\Phi\Phi')' f - (p-1) c(\om) \Phi^{p-2} f, 
    	\end{equation}
    	with a base Hilbert space $L^2(\rone)$.  Any self-adjoint extension of $\cl_+$ (also denoted by $\cl_+$),
    	has the property $\cl_+|_{\{\Phi^{p-1}\}^\perp}\geq 0$. 
    	In particular, (any self-adjoint extension of) $\cl_+$ has at most one negative eigenvalue. 
    \end{proposition}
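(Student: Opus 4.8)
The plan is to establish the two assertions of Proposition \ref{prop:23} in order: first the Euler--Lagrange equation \eqref{60}, then the crucial sign property $\cl_+|_{\{\Phi^{p-1}\}^\perp}\geq 0$, and finally deduce from it the bound $n(\cl_+)\leq 1$ on the Morse index.

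\textbf{Step 1: the Euler--Lagrange equation.} Since $\Phi=\sqrt{\vp}$ and $\vp$ solves \eqref{34}, I would simply transport the relation \eqref{61} under the substitution. Starting from $-\tfrac12\vp''+\om-c\,\vp^{p/2-1}=0$ on $(-L,L)$ and writing $\vp=\Phi^2$, one has $\vp''=2(\Phi\Phi')'=2\Phi\p_x(\Phi\Phi')+2(\Phi')^2$; a short computation (multiplying through by $\Phi$ and using $\vp^{p/2-1}=\Phi^{p-2}$) converts \eqref{61} into exactly \eqref{60}. Alternatively, one can derive \eqref{60} directly as the Euler--Lagrange equation for $N[u]$ under the constraint $\int|u|^p=1$, but piggy-backing on Proposition \ref{prop:22} is cleaner since the constant $c(\om,p)$ is already identified there.

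\textbf{Step 2: the coercivity of $\cl_+$ off the direction $\Phi^{p-1}$.} This is the heart of the proposition and, I expect, the main obstacle. The key observation is that $\cl_+$ is, up to the multiplicative factor $\Phi$ on the outside, conjugate to a genuine Schr\"odinger-type operator. Concretely, introduce the unitary-like change of variables $f\mapsto g:=\Phi f$ and write $\cl_+ f=\Phi\,\mathcal M g$ where, using \eqref{61} to simplify $\om-(\Phi\Phi')'-(p-1)c\Phi^{p-2}$ on $\mathrm{supp}\,\Phi$ (this mirrors the simplification already performed in \eqref{H}), $\mathcal M$ is a one-dimensional Schr\"odinger operator $-\p_x^2+V$ on the interval $[-L,L]$ with appropriate boundary conditions coming from the chosen self-adjoint extension. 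Then for $f\perp \Phi^{p-1}$ one has $\dpr{\cl_+ f}{f}=\dpr{\mathcal M g}{g/\Phi}\cdot(\dots)$ — more precisely $\dpr{\cl_+f}{f}_{L^2}=\dpr{\mathcal M g}{g}$-type quadratic forms must be matched carefully, since the weight $\Phi$ degenerates at $\pm L$. The constraint $f\perp\Phi^{p-1}$ translates into $g=\Phi f\perp \Phi^{p-2}$, i.e. $\dpr{g}{\Phi^{p-2}}=0$. Now differentiating the profile equation \eqref{60} shows that $\Phi'$ (extended suitably) is a zero mode of $\mathcal M$, and since $\Phi$ is bell-shaped, $\Phi'$ has exactly one sign change on $[-L,L]$, so by Sturm oscillation theory $\mathcal M$ has Morse index exactly one, with ground state of a definite sign. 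The standard variational argument (Courant--Fischer, as in the classical stability theory cited in \cite{CL}) then gives: if the negative direction of $\mathcal M$ pairs nontrivially with the constraint vector $\Phi^{p-2}$, the form is nonnegative on the orthogonal complement. Checking this pairing reduces, via the identity $\cl_+[\text{something}]=\Phi^{p-1}$ (the natural candidate being $\cl_+$ applied to the scaling/modulation direction, giving $\dpr{\cl_+^{-1}\Phi^{p-1}}{\Phi^{p-1}}<0$), to a computable inequality on the explicit constants; here \eqref{300} and the integral formulas of Proposition \ref{prop:22} do the work. The degeneracy of the weight $\Phi$ at the endpoints and the non-uniqueness of the self-adjoint extension are the delicate technical points — one must argue that the quadratic-form inequality is insensitive to the choice of extension because any two extensions differ by a finite-rank (boundary) perturbation that does not decrease the index beyond what is claimed.

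\textbf{Step 3: conclusion on the Morse index.} Once $\cl_+|_{\{\Phi^{p-1}\}^\perp}\geq 0$ is known, the bound $n(\cl_+)\leq 1$ is immediate from a dimension count: if $\cl_+$ had two linearly independent negative directions, their span (being two-dimensional) would intersect the codimension-one subspace $\{\Phi^{p-1}\}^\perp$ nontrivially, contradicting nonnegativity there. This is the same elementary linear-algebra step used throughout Hamiltonian stability theory, and it requires nothing beyond what Step 2 provides. I would then remark that, combined with the index formula \eqref{HKIF} and Corollary \ref{cor:12}, this sets up the dichotomy driving the main theorem, but that comparison is deferred to the later sections.
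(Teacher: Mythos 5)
Your Step 1 is correct, and is essentially equivalent to what the paper does: the first-order expansion of $N[(\Phi+\eps h)/\|\Phi+\eps h\|_{L^p}]$ yields \eqref{60} weakly with $c(\om)=2\int(\Phi\Phi')^2+\om\int\Phi^2$, the same constant as in Proposition \ref{prop:22}; your substitution $\vp=\Phi^2$ into \eqref{61} lands in the same place. Step 3 (the dimension count giving $n(\cl_+)\leq 1$) is also fine. The problem is Step 2.

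You have missed the argument that makes the nonnegativity essentially free. Since $\Phi$ is a constrained minimizer, the scalar function $f(\eps)=N[(\Phi+\eps h)/\|\Phi+\eps h\|_{L^p}]$ satisfies not only $f'(0)=0$ but also $f''(0)\geq 0$. For $h\perp\Phi^{p-1}$ — precisely the tangent space of the constraint $\int|u|^p=1$ at $\Phi$ — the order-$\eps^2$ terms of the expansion collapse, after one integration by parts, to $\tfrac12 f''(0)=\dpr{\cl_+h}{h}$. That identity \emph{is} the proof of $\cl_+|_{\{\Phi^{p-1}\}^\perp}\geq 0$; no spectral analysis of $\cl_+$ is required at this stage. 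This second-variation computation is the heart of the paper's proof and is entirely absent from your proposal.

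The alternative route you sketch in its place has genuine gaps. (i) The conjugation $g=\Phi f$ does not produce a tractable Schr\"odinger operator: on $\mathrm{supp}\,\Phi$ one gets $\dpr{\cl_+f}{f}=\|g'\|_{L^2}^2-(p-2)c(\om)\int\Phi^{p-4}g^2$, and the effective potential $\Phi^{p-4}$ blows up at $\pm L$ when $p<4$, so Sturm oscillation theory for regular or decaying potentials does not apply off the shelf. The transformation that actually works, $t=\int_0^x dy/\Phi$ combined with $f\mapsto\sqrt{\Phi}f$, is developed only later (Lemma \ref{le:op}), where the exponential decay of the resulting potential is proved. (ii) The pairing criterion you invoke presupposes $n(\cl_+)=1$ (part of what is being established) and requires exhibiting an explicit $\chi$ with $\cl_+\chi=\Phi^{p-1}$ and verifying $\dpr{\chi}{\Phi^{p-1}}\leq 0$; you produce neither. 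Note that $\cl_+\Phi=-2\om\Phi-(p-4)c(\om)\Phi^{p-1}$ mixes the two directions, so the ``natural candidate'' is not obvious, and the inequality you defer to \eqref{300} is never computed. (iii) The claim that different self-adjoint extensions ``differ by a finite-rank boundary perturbation that does not decrease the index'' is not an argument; the correct mechanism is that a quadratic-form inequality proved on test functions passes to any self-adjoint extension generated by that form. In short: replace Step 2 by the second-variation computation.
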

   {\bf Remark:}   The operator $\cl_+$ is associated with the quadratic form 
   $$
   q(u,v)=\dpr{\p_x(\Phi u)}{\p_x(\Phi v)} +\om \int_{\rone} u(x) \bar{v}(x) dx - (p-1) c(\om, p) \dpr{ \Phi^{p-2} u}{v},
   $$
   acting on functions $u,v\in H^1(\rone)$. 
   
   By the relation \eqref{60}, we have that $-(\Phi\Phi')'+\om=c(\om)\Phi^{p-2}$,   and so we can rewrite the linearized operator and the corresponding quadratic form, on functions $f: supp f\subset (-L,L)$, as follows 
   \begin{eqnarray}
   \label{88} 
   \cl_+ f=-\Phi\p_x^2 (\Phi f)  - (p-2)c(\om) \Phi^{p-2} f \\
   \label{89} 
   q(u,v)=\dpr{\p_x(\Phi u)}{\p_x(\Phi v)} - (p-2) c(\om, p) \dpr{ \Phi^{p-2} u}{v}. 
   \end{eqnarray}
   \begin{proof}
   	For $\eps: |\eps|<<1$ and a test function $h: supp \ h\subset (-L,L)$, consider a perturbation $\Phi+\eps h$. According to the minimization property of $\Phi$, we must have that the scalar function 
   	$$
   	f(\eps):=N\left[\f{\Phi+\eps h}{\|\Phi+\eps h\|_{L^p}}\right]=\f{1}{4 \|\Phi+\eps h\|_{L^p}^4} \int (2\Phi \Phi'+2\eps (\Phi h)'+\eps^2\p_x(h^2))^2 + \f{\om}{\|\Phi+\eps h\|_{L^p}^2}\int (\Phi+\eps h)^2.
   	$$
   	has a minimum at $\eps=0$. As a  consequences of the minimization property, $f'(0)=0$, while $f''(0)\geq 0$. It is however easier to work with expansions in powers of $\eps$, instead of differentiating directly in $\eps$.  We have 
   	$$
   	\|\Phi+\eps h\|_{L^p}^p=1+p \eps \dpr{\Phi^{p-1}}{h} + \f{p(p-1)}{2} \eps^2 \dpr{\Phi^{p-2}}{h^2}+O(\eps^3).
   	$$
   	To this end, let us do the first order expansion. 
   	\begin{eqnarray*}
   		f(\eps) &=&  \int (\Phi \Phi'+\eps (\Phi h)')^2 (1-4 \eps \dpr{\Phi^{p-1}}{h})+
   		\om \int (\Phi^2+2\eps \Phi h) (1-2\eps \dpr{\Phi^{p-1}}{h})+O(\eps^2)=\\
   		&=& f(0)+ 2\eps\left(-\int\Phi(\Phi\Phi')h+\om\dpr{\Phi}{h} -\dpr{\Phi^{p-1}}{h} 
   		\left(2\int (\Phi\Phi')^2 +\om \int \Phi^2\right) \right)+O(\eps^2).
   	\end{eqnarray*}
   	Hence  we have the Euler-Lagrange equation \eqref{60}  in weak sense, with  $c(\om)=2\int (\Phi\Phi')^2 +\om \int \Phi^2$, which is the same coefficient that we have encountered in Proposition \ref{prop:22}.

   	Next, we deal with the second order condition, namely $f''(0)\geq 0$. To simplify matters, take $h\perp \Phi^{p-1}$, that is $ \dpr{\Phi^{p-1}}{h}=0$.  Looking at the next order, that is the terms containing $\eps^2$. We obtain 
   	\begin{eqnarray*}
   		\f{f''(0)}{2}=-(p-1)\dpr{\Phi^{p-2}}{h^2} \left(2  \int (\Phi \Phi')^2   +\om \int \Phi^2  \right) + \dpr{(\Phi h)'}{(\Phi h)'} +\int \Phi \Phi' \p_x (h^2)+\om \int h^2.
   	\end{eqnarray*}
   	It is now clear, after integration by parts, that one can write the previous identity in the form 
   	$$
   	\f{f''(0)}{2}=\dpr{\cl_+ h}{h}.
   	$$
   	As  this is valid for all $h: \dpr{\Phi^{p-1}}{h}=0$ and $f''(0)\geq 0$,  we conclude that  $\cl_+|_{\{\Phi^{p-1}\}^\perp}\geq 0.$
   \end{proof}
    From Proposition \ref{prop:23} and the minimax formula for the eigenvalues of a self-adjoint operator, we conclude that $n(\cl_+)\leq 1$. Our next task is to characterize  the rest of the  spectrum of $\cl_+$. We will need the relation between $\Phi_x^2$ and $\Phi\Phi_{xx}$ with $\Phi^{p-2}.$ Multiplying \eqref{60} by $\Phi'$ and integrating, taking into account that  $\Phi(\pm L)=0$,  we obtain 
    \begin{equation}
    \label{key}
    (\Phi')^2=\omega-\frac{2c(\om)}{p}\Phi^{p-2}.
    \end{equation}
    Another very important relation, which is  obtained from \eqref{60} and \eqref{key} is 
    \begin{equation}
    \label{key2}
    \Phi\Phi''=\frac{2-p}{p}c(\om)\Phi^{p-2}.
    \end{equation}
    We now state a technical result, which connects the degenerate operator $\cl_+$, posed on domain contained in $L^2[-L,L]$ to a standard Schr\"odinger operator, defined herein 
    \begin{equation}
    \label{361} 
     L_+:=-\p_t^2+\f{\om}{4} - c(\om)\f{2p^2-5p+3}{2p} \Phi^{p-2}(x(t)).
    \end{equation}
    To this end, we borrow an important idea  from \cite{GM1}. Namely,  a change of variables is introduced, which transform the degenerate operator $\cl_+$, acting on $D(\cl_+)\subset L^2[-L,L]$ into a standard Schr\"odinger operator $L_+$, with exponentially decaying potential, acting on a subspace of $L^2(\rone)$. 
    \begin{lemma}
    	\label{le:op}
    	The transformation 
    		\begin{equation}
    		\label{110} 
    		t(x)=\int_0^x\frac{dy}{\Phi(y)}:(-L,L)\to \rone
    		\end{equation}
    		is a one-to-one mapping from $(-L,L)$ to $\rone$. The inverse function $x:\rone\to (-L,L)$ satisfies 
    		$$
    		\lim_{t\to \pm \infty} x(t)=\pm L
    		$$
    		 and moreover, it approaches the limits with exponential rates. More specifically,  for every $\eps>0$, there exists $C_\eps$, so that  
    		 \begin{eqnarray}
    		 \label{152}
    		 & & L-C_\eps e^{(-\sqrt{\om}+\eps)t}<x(t)<L, \ \ t>0 \\
    		 \label{153} 
    		 & & -L<x(t)<-L+C_\eps e^{(\sqrt{\om}-\eps)t}, \ \ t<0.
    		 \end{eqnarray}
    	For the Schr\"odinger operator 	$L_+$, defined in \eqref{361}, 
    	 there is the relation 
    		 \begin{equation}
    		 \label{210} 
    		 \cl_+ f =\f{L_+ (\sqrt{\Phi} f)}{\sqrt{\Phi}}.
    		 \end{equation}
    \end{lemma}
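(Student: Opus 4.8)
The plan is to prove the three assertions of Lemma \ref{le:op} in sequence: first the bijectivity and exponential asymptotics of the change of variables $t\mapsto x$, then the operator identity \eqref{210}.

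\textbf{Step 1: Bijectivity of $t(x)$.} The map $t(x)=\int_0^x \frac{dy}{\Phi(y)}$ is smooth and strictly increasing on $(-L,L)$ since $\Phi>0$ there. To see it is onto $\rone$, I would use the endpoint behavior of $\Phi$ from \eqref{712} (translated to $\Phi=\sqrt{\vp}$, so $\Phi(x)\sim \sqrt{\om}\,(L-x)$ as $x\to L^-$, and similarly at $-L$). Then near $x=L$ the integrand $\frac{1}{\Phi(y)}\sim \frac{1}{\sqrt{\om}(L-y)}$ is non-integrable, so $t(x)\to +\infty$ as $x\to L^-$; symmetrically $t(x)\to-\infty$ as $x\to-L^+$. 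Hence $t:(-L,L)\to\rone$ is a smooth strictly increasing bijection, and the inverse $x(t)$ is smooth with $\lim_{t\to\pm\infty}x(t)=\pm L$.

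\textbf{Step 2: Exponential rates.} This is where I expect the one genuinely substantive computation. Fix $\ve>0$. Using \eqref{key}, $(\Phi')^2=\om-\frac{2c(\om)}{p}\Phi^{p-2}$, so as $x\to L^-$ we have $\Phi'(x)\to -\sqrt{\om}$ (bell-shapedness fixes the sign), and more precisely $\Phi'(x)=-\sqrt{\om}+O(\Phi^{p-2})=-\sqrt{\om}+O((L-x)^{p-2})$. Since $\frac{d}{dt}\log\Phi(x(t)) = \frac{\Phi'(x(t))\,x'(t)}{\Phi(x(t))}$ and $x'(t)=\Phi(x(t))$ (differentiate the relation $t(x)=\int_0^x dy/\Phi$), we get $\frac{d}{dt}\log\Phi(x(t))=\Phi'(x(t))$, which tends to $-\sqrt\om$ as $t\to+\infty$. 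Hence for $t$ large, $\log\Phi(x(t)) = -\sqrt{\om}\,t + o(t)$, i.e. $\Phi(x(t))$ decays like $e^{(-\sqrt\om+\ve)t}$ (and is bounded below by $e^{(-\sqrt\om-\ve)t}$) for $t$ large. Then $L-x(t)=\int_t^\infty x'(s)\,ds=\int_t^\infty \Phi(x(s))\,ds$, and plugging in the bound on $\Phi(x(s))$ yields $L-x(t)\le C_\ve e^{(-\sqrt\om+\ve)t}$, which is \eqref{152}; \eqref{153} follows by the symmetric argument at $-L$. One should be slightly careful to absorb constants and to note the estimate only needs to hold for $t$ large, which is why the $C_\ve$ is allowed to depend on $\ve$.

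\textbf{Step 3: The operator identity.} This is a direct computation using the chain rule under the substitution $x=x(t)$, with $\frac{dx}{dt}=\Phi(x(t))$ and hence $\p_x = \frac{1}{\Phi}\p_t$. For a function $f$ on $(-L,L)$, write $g=g(t)=f(x(t))$. Then $\Phi f$ becomes $\Phi g$ in the $t$ variable, and $\Phi\p_x^2(\Phi f) = \Phi\cdot\frac{1}{\Phi}\p_t\big(\frac{1}{\Phi}\p_t(\Phi g)\big)$; expanding $\p_t\big(\frac1\Phi \p_t(\Phi g)\big)$ and using \eqref{key}, \eqref{key2} to replace $\Phi'^2=(\p_x\Phi)^2$, $\Phi\Phi''$ (both as functions of $x(t)$) in terms of $\Phi^{p-2}$, together with $\p_t\Phi(x(t))=\Phi\Phi'$, one collects terms. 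The claim is that after substituting $f=\frac{h}{\sqrt\Phi}$ (equivalently checking $\cl_+f=\frac{L_+(\sqrt\Phi f)}{\sqrt\Phi}$) all the first-order-in-$\p_t$ terms cancel and the zeroth-order coefficient assembles into exactly $\frac{\om}{4}-c(\om)\frac{2p^2-5p+3}{2p}\Phi^{p-2}$, matching \eqref{361}. The only real care needed is bookkeeping: the potential in $L_+$ has contributions from the conjugation by $\sqrt\Phi$ (producing a $-\frac14\frac{(\Phi')^2}{\Phi^2}\cdot\Phi^2$-type term and a $\frac12\Phi''\Phi$-type term via $\frac{d^2}{dt^2}\sqrt\Phi$ over $\sqrt\Phi$) plus the genuine potential $-(p-2)c(\om)\Phi^{p-2}$ from the form \eqref{88}; plugging \eqref{key} and \eqref{key2} into these and simplifying the rational function of $p$ gives the stated coefficient. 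I would present this as a short guided calculation rather than writing out every intermediate line. No step here is subtle beyond algebra, so the main obstacle is really Step 2 — getting the exponential asymptotics cleanly from \eqref{key} and \eqref{712} without circular reasoning.
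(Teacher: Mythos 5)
Your proposal is correct and follows essentially the same route as the paper: non-integrability of $1/\Phi$ at $\pm L$ via \eqref{712} for bijectivity, an asymptotic computation showing $\log(L-x(t))/t\to-\sqrt{\om}$ for the exponential rates, and the conjugation $\Phi^{1/2}\cl_+[\,\cdot\,\Phi^{-1/2}]$ combined with $\p_t=\Phi\p_x$, \eqref{key} and \eqref{key2} for the identity \eqref{210}. The only cosmetic difference is in Step 2, where the paper gets the rate by L'Hospital applied to $\ln(L-x)/t(x)$, while you differentiate $\log\Phi(x(t))$ and integrate $x'(s)=\Phi(x(s))$; the two computations are equivalent.
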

    {\bf Remark:}  
   Note that due to the asymptotics \eqref{152} and \eqref{153}, and \eqref{712},  the  potential \\ $W(t)=c(\om)\f{2p^2-5p+3}{2p} \Phi^{p-2}(x(t))$ obeys the exponential decay estimate 
    	$$
    	0<W(t)\leq C_\eps e^{-((p-2)\sqrt{\om}-\eps) |t|}, t\in \rone 
    	$$
    	for every $\eps>0$ and some $C_\eps$. 
    \begin{proof}
    	By \eqref{712}, we have that $|\Phi(y)|\sim |L-y|, y\sim L$ and $|\Phi(y)|\sim |L+y|, 
    	y\sim -L$, we have that the integral in \eqref{110} converges to $-\infty$ as $x\to -L$, while it approaches $\infty$, as $x\to L$. That is, the transformation \eqref{110} provides a one-to-one homeomorphic map 
    	$t:   (-L,L) \to (-\infty, \infty)$. 	Its inverse, which will be frequently used, is  denoted $x(t)$. We need the asymptotic behavior of $x(t)$, which we analyze next. A simple L'Hospital calculation shows that 
    	$$
    	\lim_{x\to L-} \f{\ln(L-x)}{t(x)}=- \lim_{x\to L-}  \f{\Phi(x)}{L-x}=-\sqrt{\om},
    	$$
    	where we have used the asymptotic \eqref{712} and similar at $x=-L$. In short, we obtain that for every $\eps>0$, we have for every $\eps>0$, \eqref{152} and \eqref{153} hold true.

    	For the formula \eqref{210}, note first that  have the relation $\p_t = \Phi \p_x$. We  compute $ \Phi^{\f{1}{2}} \cl_+[f\Phi^{-\f{1}{2}}]$. By direct calculations, 
    	\begin{eqnarray*}
    		-\Phi^{\f{3}{2}}\p_x^2[\sqrt{\Phi} f] &=&   -\Phi^2 \p_x^2 f - \Phi \Phi_x \p_x f - \f{2\Phi''\Phi-(\Phi')^2}{4} f=\\
    		&=& -\Phi^2 \p_x^2 f - \Phi \Phi_x \p_x f  +\f{\om}{4} f - \f{c(\om)(3-p)}{2p} \Phi^{p-2} f,
    	\end{eqnarray*}
    	where we have used the relations \eqref{key} and \eqref{key2}. 
    	It follows that 
    	$$
    	 \Phi^{\f{1}{2}} \cl_+[f\Phi^{-\f{1}{2}}] = -\Phi^2 \p_x^2 f - \Phi \Phi_x \p_x f  +\f{\om}{4} f - c(\om) \Phi^{p-2} \f{2p^2-5 p+3}{2p}  f.
    	$$
    	On the other hand, note that 
    	$$
    	-\p_t^2 f=-\p_t (\Phi \p_x f)=-\Phi(\p_x(\Phi \p_x f))= - \Phi^2 \p_x^2 f-\Phi \Phi_x \p_x f.$$
    	It follows that $L_+ f =  \Phi^{\f{1}{2}} \cl_+[f\Phi^{-\f{1}{2}}] $, or alternatively, \eqref{210}. 
    \end{proof}
    \begin{proposition}
    	\label{prop:45} Let  $\cl_+$ be any self-adjoint extension of the symmetric  operator introduced in \eqref{L+}.  Then, $\cl_+$ has exactly one negative eigenvalue, say $-\zeta^2$, a simple eigenvalue at zero, with $Ker(\cl_+)=span[\Phi']$, while the rest of the spectrum is away from zero. That is, there exists $\de>0$, so that 
    	\begin{equation}
    	\label{215} 
    		spec(\cl_+)\setminus \{-\zeta^2, 0\}\subset [\de, +\infty).
    	\end{equation}
    \end{proposition}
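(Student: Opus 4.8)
The plan is to transfer all spectral questions from the degenerate operator $\cl_+$ to the honest Schr\"odinger operator $L_+$ from \eqref{361} via the conjugation \eqref{210}, and then analyze $L_+$ directly. The first step is to make the unitary equivalence precise: the map $f \mapsto \sqrt{\Phi} f$ should be shown to be (a constant multiple of) an isometry from $L^2[-L,L]$ (with the natural weight coming from the change of variables $dx = \Phi\, dt$) onto $L^2(\rone)$ in the $t$ variable. Concretely, since $\p_t = \Phi\p_x$ and $dx = \Phi(x(t))\,dt$, one checks $\int_{-L}^L |\cl_+ f \cdot \bar f|\,dx$ and $\int_{-L}^L |f|^2\,dx$ transform into the corresponding quadratic form and $L^2$ norm for $L_+(\sqrt{\Phi}f)$ against $\sqrt{\Phi}f$ in the $t$-variable; this is exactly the content of \eqref{210}. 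Consequently $\cl_+$ and $L_+$ have the same spectrum (with multiplicities), and $Ker(\cl_+)$ corresponds to $Ker(L_+)$ under $f \mapsto \sqrt{\Phi}f$. I would remark that any self-adjoint extension of $\cl_+$ corresponds, under this equivalence, to a self-adjoint realization of $-\p_t^2 + \f{\om}{4} - W(t)$ on $\rone$; since $W$ decays exponentially by the Remark following Lemma \ref{le:op}, the operator $-\p_t^2 + \f\om4 - W$ is essentially self-adjoint on $\rone$ (limit point at both ends because of the $+\f\om4 > 0$ term dominating at infinity), so in fact the extension is unique and equals the standard one.

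The second step is to locate the essential spectrum and the kernel of $L_+$. Since $W(t) \to 0$ exponentially as $|t| \to \infty$, Weyl's theorem gives $\sigma_{ess}(L_+) = [\f\om4, \infty)$; in particular everything in $\left(-\infty, \f\om4\right)$ is discrete spectrum of finite multiplicity. Next I would identify a zero mode: differentiating the profile equation. From \eqref{60} one has $-\Phi\p_x(\Phi\Phi') + \om\Phi - c(\om)\Phi^{p-1} = 0$ on $(-L,L)$; applying $\p_x$ and rearranging shows $\cl_+ \Phi' = 0$ (this is the standard translation-invariance computation, and indeed $\cl_+$ in the form \eqref{88} kills $\Phi'$ after using \eqref{key}, \eqref{key2}). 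Under \eqref{210} the corresponding zero mode of $L_+$ is $\psi_0 := \sqrt{\Phi}\,\Phi'$. The key point is that $\psi_0$ has exactly one sign change: $\Phi' < 0$ on $(0,L)$, $\Phi' > 0$ on $(-L,0)$, $\Phi'(0)=0$, so $\psi_0(t)$ vanishes only at $t=0$ (where $x(t)=0$) and changes sign there. By Sturm oscillation theory for the one-dimensional Schr\"odinger operator $L_+$ on $\rone$, an eigenfunction with exactly one nodal point is the \emph{second} eigenfunction; hence $0$ is the second eigenvalue of $L_+$, it is simple, and there is exactly one eigenvalue strictly below it, which is therefore negative (call it $-\zeta^2$; it is negative rather than in $[0,\f\om4)$ because we already know from Proposition \ref{prop:23} that $n(\cl_+)=n(L_+) \ge 1$... more precisely, $\cl_+ \le 0$ fails but $\cl_+$ restricted to $\{\Phi^{p-1}\}^\perp$ is $\ge 0$, so $n(\cl_+) \le 1$, and combined with the existence of the ground state below the simple zero eigenvalue we get $n(\cl_+) = 1$ and the lowest eigenvalue is $< 0$).

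The third step is the spectral gap \eqref{215}: we must rule out eigenvalues of $L_+$ in $(0, \f\om4)$ and show $0$ is isolated. Isolation of $0$ in $\sigma(L_+)$ follows since $0$ is a simple discrete eigenvalue below $\sigma_{ess} = [\f\om4,\infty)$, so there is a gap to the next spectral point above it; combined with $\sigma(L_+) \cap (-\infty,0) = \{-\zeta^2\}$ being a single point, we get $\sigma(L_+)\setminus\{-\zeta^2,0\} \subset [\de,\infty)$ for some $\de > 0$. Translating back through \eqref{210} gives \eqref{215} for $\cl_+$, with $Ker(\cl_+) = span[\Phi']$. The main obstacle I anticipate is the careful justification of the unitary equivalence at the level of \emph{unbounded self-adjoint operators} — in particular checking that the formal conjugation \eqref{210} really matches domains, so that the self-adjoint extension of $\cl_+$ corresponds to the (unique, by the limit-point argument above) self-adjoint realization of $L_+$, and hence the "any self-adjoint extension" clause in the statement is genuinely covered. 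A secondary technical point is verifying that $\psi_0 = \sqrt{\Phi}\Phi' \in L^2(\rone)$ and lies in the domain of $L_+$ — this uses the exponential asymptotics \eqref{152}–\eqref{153} together with \eqref{712} to see $\psi_0$ and $\psi_0'$ decay exponentially in $t$, so $\psi_0$ is a bona fide eigenfunction and Sturm theory applies cleanly.
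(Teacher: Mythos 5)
Your proposal is correct in outline, but it reaches the conclusion by a genuinely different route than the paper. The paper's proof splits into two hands-on pieces: (i) it establishes $n(\cl_+)=1$ by explicitly computing $\dpr{\cl_+\Phi}{\Phi}<0$ via Pohozaev-type identities (dotting \eqref{60} with $x\Phi'$ to relate $\int(\Phi\Phi')^2$, $\int\Phi^2$ and $c(\om)$), combined with $\cl_+|_{\{\Phi^{p-1}\}^\perp}\geq 0$ from Proposition \ref{prop:23}; and (ii) it proves the gap \eqref{215} by contradiction, extracting a weakly convergent sequence $f_n=P_{[0,1/n)}f_n$, showing $g_n=\Phi f_n$ is bounded in $H^1$, using Hardy's inequality (Lemma \ref{Hardy}) to prevent the limit $U$ from vanishing, and only at the very last step invoking \eqref{210} \emph{distributionally on the single eigenfunction} $U$ to contradict Sturm--Liouville simplicity of the zero eigenvalue of $L_+$. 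Your argument instead upgrades \eqref{210} to a full unitary equivalence $f\mapsto\sqrt{\Phi(x(t))}f(x(t))$ between $L^2([-L,L],dx)$ and $L^2(\rone,dt)$ (which is indeed an isometry, since $dt=dx/\Phi$), and then reads off everything from $L_+$: Weyl's theorem gives $\si_{ess}(L_+)=[\om/4,\infty)$, the zero mode $\sqrt{\Phi}\,\Phi'$ has exactly one sign change, and the converse oscillation theorem yields simultaneously that $0$ is the simple second eigenvalue, that there is exactly one (negative) eigenvalue below it, and that $0$ is isolated — so the gap comes for free and you do not actually need the explicit sign computation of $\dpr{\cl_+\Phi}{\Phi}$ nor the Hardy-inequality compactness argument. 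The trade-off is exactly the one you flag: your route requires matching domains so that every self-adjoint extension of the symmetric operator corresponds to the (unique, limit-point) realization of \eqref{361}, whereas the paper deliberately avoids committing to this by working with the quadratic form and spectral projections of an arbitrary extension and using \eqref{210} only weakly against compactly supported test functions, followed by elliptic regularity. If you carry out the domain verification (your image of $C_0^\infty(-L,L)$ contains $C_0^\infty(\rone)$, a core for $L_+$, so essential self-adjointness does transfer), your proof is complete and in fact yields the stronger statement that the extension is unique; do also record, as you note, that $\sqrt{\Phi}\,\Phi'\in L^2(\rone,dt)$ with exponential decay via \eqref{712} and \eqref{152}--\eqref{153}, so that Sturm theory legitimately applies to it as an eigenfunction.
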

  \begin{proof}
  	Direct inspection shows that 
  	\begin{eqnarray*}
  	\cl_+\Phi' &=& -\Phi\p_x^2(\Phi \Phi')+\om \Phi'-(\Phi \Phi')'\Phi'- (p-1) c(\om) \Phi^{p-2} \Phi'=\\
  	&=& -\Phi\p_x^2(\Phi \Phi') -(\Phi \Phi')'\Phi' +(\om \Phi - c(\om)\Phi^{p-1})'=\\
  	&=& -\Phi\p_x^2(\Phi \Phi') -(\Phi \Phi')'\Phi'+(\Phi\p_x(\Phi\Phi'))'=0,
  	\end{eqnarray*}
  	whence $\Phi'\in Ker(\cl_+)$. This is of course a consequence of the translational invariance of the profile equation \eqref{60}. Also by a direct inspection, and using the equation \eqref{60}, we conclude 
  	$$
  	\cl_+\Phi=-2\om \Phi-(p-4) c(\om) \Phi^{p-1}.
  	$$
  	Taking dot product with $\Phi$, we obtain, 
  	\begin{equation}
  		\label{l:20} 
  			\dpr{\cl_+ \Phi}{\Phi}=-2\om \int\Phi^2 - (p-4) c(\om).
  	\end{equation}
  
  	We will now show that such quantity is negative, for all $p>2$,  which would establish that $\cl_+$ has exactly one negative eigenvalue. One could use the arguments of Proposition \ref{prop:22} to evaluate the quantity $	\dpr{\cl_+ \Phi}{\Phi}$ explicitly in terms of $p,\om$. Instead, we provide an easier roundabout argument, which shows $	\dpr{\cl_+ \Phi}{\Phi}<0$. 
  	To this end, recall that we have already established the relation 
  	$$
  	c(\om) = \frac 12\int (\vp')^2 +\om \int \vp= 2\int (\Phi \Phi')^2 + \om \int \Phi^2.
  	$$
  	We now obtain another identity based on taking dot product of \eqref{60} with $x\Phi'$. Due to $\int \Phi^pdx=1$, we have 
  	\begin{eqnarray*}
  		c(\om)=\f{p \om}{2} \int \Phi^2 - \f{p}{2} \int (\Phi \Phi')^2.
  	\end{eqnarray*}
  	Combining this with \eqref{60}, we establish the relationship 
  	$$
  	\int (\Phi \Phi')^2 = \f{p-2}{p+4} \om \int \Phi^2 dx,
  	$$
  	which in turn implies 
  	$$
  	c(\om)=\f{3p\om}{p+4}\int \Phi^2.
  	$$
  	Substituting this in \eqref{l:20} results in the formula 
  	$$
  		\dpr{\cl_+ \Phi}{\Phi}=-\om \f{(3p^2-10p+8)}{p+4}\int \Phi^2 dx<0, 
  	$$
  	for $p>2, \om>0$. 
  	Thus, $n(\cl_+)=1$. 
  	
  	Next, we establish that the eigenvalue at zero is simple and there is a gap between the zero and the non-negative portion of the spectrum. Recall that we work with arbitrary self-adjoint extension of $\cl_+: D(\cl_+)\subset L^2[-L,L]$, given by the quadratic form $q$. Since $q(u,u)\geq 0$ for all $u\perp \Phi^{p-1}$, it follows that $\cl_+$ has only one simple negative eigenvalue, say  $-\zeta^2$, and $\si(\cl_+)\setminus \{-\zeta^2\}\subset [0, \infty)$. For any  $\Om$, a relatively open subset in $[0, \infty)$, denote the spectral projection $P_\Om:=\chi_\Om (\cl_+)$. Supposing for a contradiction that \eqref{215} fails, we select a sequence $f_n$, with $P_{[0, \f{1}{n})} f_n=f_n, \|f_n\|_{L^2}=1, f_n\perp \Phi'$. It follows that 
  	\begin{equation}
  	\label{240} 
  		\dpr{\cl_+ f_n}{f_n}=q(f_n, f_n)=\|(\Phi f_n)'\|_{L^2[-L,L]}^2 - 
  		(p-2) c(\om) \int_{-L}^L \Phi^{p-2}  f_n^2 dx\to 0.
  	\end{equation}
  	Denote $g_n:=\Phi f_n$. Clearly, $\|g_n\|_{L^2}\leq \|\Phi\|_{L^\infty}$, while 
  	$$
  	\limsup_n \|g_n'\|_{L^2}=\limsup_n [q(f_n,f_n)+(p-2) c(\om) \int_{-L}^L \Phi^{p-2}  f_n^2] \leq C_p \|\Phi\|_{L^\infty}^{p-2},
  	$$
  	 whence  $\sup_n \|g_n\|_{H^1}<\infty$. 
  	 
  	 From the various convergences and the weak compactness of bounded sets in $L^2[-L,L]$, and the compactness of the embedding $H^1[-L,L]$ into $L^2[-L,L]$, it follows that there exist a subsequence (denoted the same), so that the weak convergences 
  	$f_n\rightharpoonup_{L^2} U$, $g_n\rightharpoonup_{H^1} g$ hold, as well as the strong convergence  $\lim_n \|g_n-g\|_{L^2[-L,L]}=0$. In particular, one can see that for every $\de>0$, $\lim_n \|g_n-g\|_{L^2[-L+\de,L-\de]}=0$. Moreover, since $\Phi$ does not vanish on $[-L+\de, L-\de]$, we have that $g=\Phi U$ and 
  	$\lim_n \|f_n-U\|_{L^2[-L+\de, L-\de]}=0$, for every $\de>0$. This is now enough to conclude that 
  	\begin{equation}
  	\label{220} 
  	\lim_n \int_{-L}^L \Phi^{p-2}  f_n^2 dx = \int_{-L}^L \Phi^{p-2}  U^2 dx
  	\end{equation}
  Indeed, since $\Phi(x)\leq C(L+x), -L<x$ and $\Phi(x)\leq C(L-x), x<$, we obtain 
  $$
  | \int_{[-L, -L+\de)\cup (L-\de, L)} \Phi^{p-2}  f_n^2 dx |\leq C \de^{p-2} \|f_n\|_{L^2}^2= C \de^{p-2}, 
  $$
  and similarly for the integrals  
  $|\int_{[-L, -L+\de)\cup (L-\de, L)} \Phi^{p-2}  U^2 dx|$.  On the other hand, on the interval  $[-L+\de, L-\de]$, we use the convergence $\lim_n \|f_n-U\|_{L^2[-L+\de, L-\de]}=0$, to estimate 
  $$
  | \int_{-L+\de}^{L-\de} \Phi^{p-2}  f_n^2 dx -\int_{-L+\de}^{L-\de} \Phi^{p-2}  U^2 dx|\leq \|\Phi\|_{L^\infty}^{p-2} \|f_n-U\|_{L^2[-L+\de, L-\de]} (\|f_n\|_{L^2}+\|U\|_{L^2}).
  $$
  Putting all this together ensures \eqref{220}. 
  
  We now claim that $U$ is not identically zero. Assume for a contradiction that $U=0$. In view of \eqref{240} and \eqref{220}, this implies that $\lim_n \|g_n'\|_{L^2}=0$. Since $g_n\in H^1[-L,L]$, it follows that it is uniformly continuous function, whence 
  the limit $\lim_{x\to L-} g_n(x)=\lim_{x\to L-} \Phi(x) f_n(x)=:c_n$ exists. Then, $c_n=0$, since otherwise $|f_n(x)|\geq \f{|c_n|}{2\Phi(x)}\geq \f{C_n}{(L-x)}$  for all $x\in (L-\de, L)$ and some $C_n>0$. But then, 
  $$
  1=\int_{-L}^L f_n^2(x) dx \geq \int_{L-\de}^L f_n^2(x) dx\geq C_n^2 
  \int_{L-\de}^L \f{1}{(L-x)^2}  dx=\infty,
  $$
  a contradiction. It follows that $g_n(L)=0$. Similarly, $g_n(-L)=0$. 
 Now, we can estimate 
  $$
  \int_{-L}^0 |f_n(x)|^2 dx=\int_{-L}^0 \f{|g_n(x)|^2}{\Phi^2(x)} dx \leq C  \int_{-L}^0 \f{|g_n(x)|^2}{(L+x)^2} dx\leq C \int_{-L}^0 |g_n'(x)|^2 dx, 
  $$
  where we have first observed that $\Phi(x)> C(L+x)$ on $x\in (-L,0)$, and  in the last step, we have applied the Hardy's inequality (see \eqref{har}), as $g_n(-L)=0$. Similarly, 
  $$
  \int_{0}^L |f_n(x)|^2 dx\leq C  \int_{0}^L |g_n'(x)|^2 dx.
  $$
  Combining the last two inequalities, we obtain 
  $$
  1=\int_{-L}^0 |f_n(x)|^2 dx+\int_{0}^L |f_n(x)|^2 dx\leq C \int_{-L}^L |g_n'(x)|^2 dx,
  $$
  which is contradictory as $\lim_n \|g_n'\|_{L^2}=0$. This proves that $U$ is not identically zero. 
  
  As $f_n=P_{[0,1/n)} f_n$ and $f_n\rightharpoonup U$, it is clear that  $U$ is an eigenfunction for $\cl_+$. This can also be seen as a consequence of \eqref{220}, the inequality $\liminf_n \|g_n'\|_{L^2}\geq \|g\|_{L^2} $ (which is the lower semi-continuity of $L^2$ norm, with respect to weak convergence) and \eqref{240}.
  
   In any case, we conclude  that $\cl_+ U=0$. Recall that $f_n\perp \Phi'$, whence their weak limit $f_n\rightharpoonup U$ also satisfies $U\perp \Phi'$. According to \eqref{210} however, this implies that 
   	$$
   	L_+(\sqrt{\Phi} U)=0,
   	$$
  at least in a distributional sense\footnote{This is due to the presence of the exponentially growing in the spatial variable  factor $\Phi^{-1/2}$ in the formula \eqref{210}.}, against  the compactly supported test functions. Standard elliptic theory, together with the decay properties of $\sqrt{\Phi} U$ proves that $\sqrt{\Phi} U$ is indeed an $L^2$ eigenfunction for $L_+$.  In addition, due to $\cl_+[\Phi']=0$ and  again \eqref{210}, we also have $L_+[\sqrt{\Phi} \Phi' ]=0$ as well.  According to the standard Sturm-Liouville theory for Schr\"odinger operators acting on $\rone$, with exponentially decaying potentials, each eigenvalue of $L_+$  is simple. In our case however, we have two candidates for eigenfunctions corresponding to the zero eigenvalue, namely $\sqrt{\Phi} U, \sqrt{\Phi} \Phi'$. So, it must be that 
  $\sqrt{\Phi} U=const. \sqrt{\Phi} \Phi'$, or $U=const. \Phi'$. This is in turn  contradictory as $U\perp \Phi'$ and $U$ is not identically zero. This concludes the proof of Proposition \ref{prop:45}. 
  \end{proof}
  \subsection{Spectral theory for the operator $\cl_-$}
  In order to analyze the Schr\"odinger eigenvalue problem \eqref{412}, we shall need also basic properties of the spectrum of $\cl_-$. In the classical theory, such operator is non-negative, with a simple eigenvalue at zero. The same results holds here as well. 
  
  We start however with a formula for $\cl_-$ in the spirit of \eqref{210}. Namely, for the Schr\"odinger operator 
  $$
  L_-  = -\p_t^2+\f{9 \om}{4}  - \f{3(p+1)}{2p} c(\om) \Phi^{p-2} 
  $$
  defined on $L^2(\rone)$, there is the relation 
  \begin{equation}
  \label{814} 
 \sqrt{\Phi} \cl_- f= L_- (\sqrt{\Phi} f). 
  \end{equation}
  \begin{proposition}
  \label{L_minus} 
 Let $\cl_-$ be any self-adjoint extension of the symmetric operator
 $$
  \cl_- f=-\Phi\p_x^2[\Phi f]+2(\om-c(\om) \Phi^{p-2}) f,
 $$
 defined through the quadratic form $q(u,v)=\dpr{\Phi u}{\Phi v}+2\dpr{(\om-c(\om) \Phi^{p-2}) u}{v}$ for $u,v\in C^\infty_0(-L,L)$. 
 
  Then,  $\cl_-\geq 0$, where zero is a simple eigenvalue, with $Ker[\cl_-]=span[\Phi]$. 
  \end{proposition}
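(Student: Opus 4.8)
The plan is to reduce everything to the conjugated Schrödinger operator $L_-$ via the relation \eqref{814}, exactly as in the treatment of $\cl_+$ in Proposition \ref{prop:45}, and then exploit the fact that $\Phi$ itself is the ground state. First I would verify directly that $\Phi\in\mathrm{Ker}[\cl_-]$: plugging into $\cl_-\Phi = -\Phi\p_x^2(\Phi^2) + 2(\om - c(\om)\Phi^{p-2})\Phi$ and using $-\tfrac12\p_x^2(\Phi^2) = -(\Phi\Phi')' = c(\om)\Phi^{p-2}-\om$ (from \eqref{60}, equivalently \eqref{key}), one gets $\cl_-\Phi = -2(c(\om)\Phi^{p-2}-\om)\Phi + 2(\om-c(\om)\Phi^{p-2})\Phi = 0$ on $(-L,L)$. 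Since $\Phi>0$ on $(-L,L)$, this is a nowhere-vanishing element of the kernel, which is the crucial sign-definiteness input.

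Next I would transfer to $L_-$. By \eqref{814}, $\cl_- f = 0$ (in the distributional/quadratic-form sense against test functions supported in $(-L,L)$) is equivalent to $L_-(\sqrt{\Phi}f)=0$; in particular $L_-(\sqrt{\Phi}\,\Phi)=0$. The potential of $L_-$ is, up to the exponential asymptotics of $\Phi^{p-2}(x(t))$ established in Lemma \ref{le:op} and its Remark, an exponentially decaying perturbation of the constant $\tfrac{9\om}{4}>0$, so standard Sturm–Liouville/Schrödinger theory on $\rone$ applies: the bottom of the spectrum, if it is an eigenvalue, is simple and its eigenfunction has no zeros, while the essential spectrum starts at $\tfrac{9\om}{4}$. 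Now $\sqrt{\Phi}\,\Phi = \Phi^{3/2}$ is strictly positive on $(-L,L)$ and, by \eqref{712} together with \eqref{152}--\eqref{153}, decays like $e^{-(3/2)(\sqrt{\om}-\eps)|t|}$, hence lies in $L^2(\rone)$; elliptic regularity upgrades the distributional solution to a genuine $L^2$-eigenfunction. A positive $L^2$-eigenfunction of a Schrödinger operator on the line must be the ground state, so $0 = \inf\mathrm{spec}(L_-)$ and it is a simple eigenvalue. Pulling back through the conjugation \eqref{814} (which is an invertible correspondence on the relevant function spaces, as in Proposition \ref{prop:45}), this yields $\cl_-\ge 0$, $0\in\mathrm{spec}(\cl_-)$ simple, with $\mathrm{Ker}[\cl_-]=\mathrm{span}[\Phi]$, and moreover a spectral gap above $0$.

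There is, however, the same subtlety as for $\cl_+$: the symmetric operator $\cl_-$ on $C_0^\infty(-L,L)$ is not essentially self-adjoint, and one must argue for an arbitrary self-adjoint extension. I would handle this precisely as in Proposition \ref{prop:45}: the nonnegativity of the quadratic form $q(u,u)=\|\Phi u\|_{L^2}^2 + 2\int(\om-c(\om)\Phi^{p-2})u^2$ is not automatic from the above (the potential term $\om-c(\om)\Phi^{p-2}$ is negative near $x=0$), so instead I would show directly that $q(u,u)\ge 0$ for all admissible $u$ by passing through $L_-$: writing $g=\Phi u$, integrating by parts, and recognizing $q$ as $\langle L_-(\sqrt{\Phi}u),\sqrt{\Phi}u\rangle \ge 0$ since $L_-\ge 0$ with ground state $\Phi^{3/2}$. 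Then every self-adjoint extension inherits $\cl_-\ge 0$ from the form, and the absence of a negative eigenvalue plus the identification of the kernel proceeds by the weak-compactness / Hardy-inequality argument of Proposition \ref{prop:45}: take a would-be sequence $f_n\perp\Phi$ with $q(f_n,f_n)\to 0$, extract weak limits, use \eqref{712} and Hardy's inequality (Lemma \ref{Hardy}) to rule out the boundary concentration and show the weak limit $U$ is a nonzero kernel element of $\cl_-$, hence (via \eqref{814} and simplicity of the ground state of $L_-$) $\sqrt{\Phi}U\parallel\Phi^{3/2}$, i.e. $U\parallel\Phi$, contradicting $U\perp\Phi$.

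The main obstacle I expect is the last point — the extension-independence of the spectral gap and of the kernel dimension — because the degenerate weight $\Phi$ vanishing (linearly) at $\pm L$ is exactly what makes $\cl_-$ fail to be essentially self-adjoint, and controlling the boundary behavior of a minimizing sequence requires the quantitative endpoint asymptotics \eqref{712} fed into Hardy's inequality, just as in the proof of Proposition \ref{prop:45}. Everything else (the kernel computation, the conjugation identity \eqref{814}, and the clean Sturm–Liouville facts for $L_-$) is routine once $L_-\ge 0$ with positive ground state $\Phi^{3/2}$ is in hand.
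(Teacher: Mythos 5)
Your proposal is correct, and it reaches the conclusion by a slightly different route than the paper at one key juncture. The shared core is identical: the direct check $\cl_-\Phi=0$, the conjugation \eqref{814} giving $L_-\Phi^{3/2}=0$ with $\Phi^{3/2}>0$ and exponentially decaying in $t$, and the Sturm--Liouville conclusion that $0$ is the simple ground state of $L_-$, which then pins down $Ker[\cl_-]=span[\Phi]$. Where you diverge is in how $\cl_-\geq 0$ is obtained. The paper first proves the pointwise domination
$$
\cl_--\cl_+=2\om-(4-p)\,c(\om)\Phi^{p-2}\geq \f{\om}{2}(p-2)^2>0
$$
(splitting the cases $p\geq 4$ and $2<p<4$ and using $\Phi\leq\Phi(0)$), imports $\cl_+|_{\{\Phi^{p-1}\}^\perp}\geq 0$ from Proposition \ref{prop:23} to conclude that $\cl_-$ has at most one negative eigenvalue, and only then uses \eqref{814} to transfer that putative eigenvalue to $L_-$ and kill it with the positive ground state. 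You instead push the quadratic form itself through the change of variables, $q(u,u)=\dpr{L_-(\sqrt{\Phi}u)}{\sqrt{\Phi}u}_{L^2(dt)}\geq 0$ for $u\in C_0^\infty(-L,L)$, which yields nonnegativity of the form on a core in one stroke and sidesteps both the domination computation and the question of whether the negative spectrum of $\cl_-$ consists of eigenvalues; this is arguably cleaner, and it rests on the same (implicit in the paper as well) convention that the self-adjoint extensions under consideration are those generated by the form. Your final appeal to the weak-compactness/Hardy machinery of Proposition \ref{prop:45} is superfluous for this proposition: no spectral gap above $0$ is claimed for $\cl_-$, and the kernel identification already follows from the simplicity of the ground state of $L_-$, exactly as you note earlier in your own argument.
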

  \begin{proof}
  	A direct inspection shows that 
  	$$
  	\cl_-[\Phi]=-\Phi\p_x^2[\Phi^2]+2 (\om-c(\om) \Phi^{p-2})\Phi=0, 
  	$$
  	due to the profile equation  \eqref{60}. 
  	
  	Next, we show that there is the point-wise domination $\cl_-\geq \cl_+$, as in the classical case. Indeed,  we have  
  	\begin{eqnarray*}
  	\cl_- - \cl_+ &=& \f{9 \om}{4}-\f{3(p+1)}{2p}c(\om) \Phi^{p-2} -\left(  \f{ \om}{4}-\f{2 p^2-5 p+3}{2p}c(\om) \Phi^{p-2} \right) = \\
  	&=& 2\om - (4-p) c(\om) \Phi^{p-2}
  	\end{eqnarray*}
  	If $p\geq 4$, this is clearly a positive quantity.   In the case $2<p<4$, taking into account that $ \Phi\leq\Phi(0)=\left(\f{p \om}{2c(\om)}\right)^{\f{1}{p-2}}$, we conclude again 
  	$$
  	\cl_- - \cl_+\geq 2\om-\f{(4-p)p\om}{2}=\f{\om}{2}(p-2)^2 > 0
  	$$
  	As it was already shown in Proposition \ref{prop:23}, that $\cl_+|_{\{\Phi^{p-1}\}^\perp}\geq 0$, it follows that $\cl_-|_{\{\Phi^{p-1}\}^\perp}\geq 0$. In addition, such an inequality guarantees that $\cl_-$ may have at most a single negative eigenvalue at the bottom of its spectrum. We proceed to rule this out. Recalling 
  	 the relationship \eqref{814}, it follows that if $\cl_-$ has a negative eigenvalue, then the operator  $L_-$ has negative eigenvalue as well. Thus, it remains to rule out negative eigenvalues for $L_-$. 
  	
  	Recall that since $\cl_-[\Phi]=0$, by \eqref{814}, it follows that $L_-[\Phi^{\f{3}{2}}]=0$. Thus, the Schr\"odinger operator $L_-$ has an eigenvalue at zero, with corresponding  positive eigenfunction $\Phi^{\f{3}{2}}$. By Sturm-Liouville's theorem, this means that zero is at the bottom of the spectrum for $L_-$. We have thus ruled out negative eigenvalues for $L_-$, whence $\cl_-\geq 0$. 
  	
  	Finally, suppose that $\Psi$ is an eigenfunction for $\cl_-$, corresponding to the zero eigenvalue. That is, $\cl_-[\Psi]=0$. By \eqref{814}, $L_-[\Psi \sqrt{\Phi}]=0$. As we have just seen, zero is at the bottom of the spectrum for $L_-$ and it is hence a simple eigenvalue, with a corresponding eigenfunction $\Phi^{\f{3}{2}}$. It follows that 
  	$\Psi \sqrt{\Phi}=const. \Phi^{\f{3}{2}}$ or $\Psi=const. \Phi$. Thus $Ker[\cl_-]=span[\Phi]$ and the proof of Proposition \ref{L_minus} is complete. 
  	
  \end{proof}
 
  \section{Spectral stability of the compacton waves} 
  Our next task is to study  the stability of the waves $\phi$, which satisfy \eqref{40}. Before we address these issues, for both the degenerate NLS and KdV cases, we would like to comment on the precise relation between the solutions $\vp$  to the variational problem \eqref{34} and the waves $\phi$. 
  \begin{proposition}
  	\label{prop:49}
  	The profile equation \eqref{40} has unique bell-shaped solution $\phi:[-L,L]\to\rone$
  Moreover, this solution is related to the unique solution $\Phi$ of the variational problem \eqref{30} via a  formula 
  	\begin{equation}
  		\label{340} 
  		\Phi_\om(x)= c_{p,\om} \om^{-\al} \phi(\om^{\al} x), \al=\f{p+4}{2(p+1)(p-2)}. 
  	\end{equation}
  Finally, 
  \begin{equation}
  	\label{l:49}
  	\Phi_\om(x)=\om^{\frac{1}{2(p+1)}}\Phi_1(\om^{\f{p}{2p+2}} x),
  \end{equation}
where $\Phi_1$ is the unique minimizer of \eqref{30} with $\om=1$. 
  \end{proposition}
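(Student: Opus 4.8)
The plan is to regard the profile equation \eqref{40} and the Euler--Lagrange equation \eqref{60} satisfied by the minimizer $\Phi=\Phi_\om$ of \eqref{30} as the \emph{same} ODE up to rescaling of amplitude and of the spatial variable, the rescaling being chosen exactly so as to convert the coefficient $1$ of the nonlinearity in \eqref{40} into the Lagrange multiplier $c(\om)=c(\om,p)$ of \eqref{60}. The uniqueness of the bell-shaped solution $\phi$ of \eqref{40}, supported on a compact interval $[-L,L]$, is already Proposition \ref{prop:12}; similarly, Propositions \ref{prop:22}--\ref{prop:23} together with the remark following Proposition \ref{prop:22} show that, once $c(\om)$ is fixed by the formula \eqref{300}, the equation \eqref{60} has a \emph{unique} bell-shaped, compactly supported solution $\Phi_\om$. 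Consequently it suffices to produce one bell-shaped solution of \eqref{60} in order to identify it with $\Phi_\om$.

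For \eqref{340} I would substitute the ansatz $\Phi_\om(x)=\ga\,\phi(\de x)$ into \eqref{60}, where $\phi=\phi_{\om,p}$ is the bell-shaped solution of \eqref{40}. Since $\phi\,\p_x(\phi\,\p_x\phi)=\om\phi-\phi^{p-1}$ (this is \eqref{40} itself), the degenerate term becomes $\ga^3\de^2\big(\om\phi(\de x)-\phi^{p-1}(\de x)\big)$, so the left-hand side of \eqref{60} reduces to a linear combination of $\phi(\de x)$ and $\phi^{p-1}(\de x)$; both coefficients must vanish, which forces $\ga^2\de^2=1$ and $\ga^3\de^2=c(\om)\ga^{p-1}$, hence $\ga=c(\om)^{-1/(p-2)}$ and $\de=c(\om)^{1/(p-2)}$. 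As $\ga\,\phi(\de\,\cdot)$ is again bell-shaped and compactly supported, uniqueness yields $\Phi_\om(x)=c(\om)^{-1/(p-2)}\phi\big(c(\om)^{1/(p-2)}x\big)$; plugging in the homogeneity $c(\om,p)=c(1,p)\,\om^{(p+4)/(2(p+1))}$ read off from \eqref{300} rewrites $c(\om)^{\pm 1/(p-2)}$ as a fixed constant times $\om^{\pm\al}$ with $\al=\tfrac{p+4}{2(p+1)(p-2)}$, which is exactly \eqref{340}.

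For \eqref{l:49} I would run the same scaling argument directly on \eqref{60}: set $\Phi_\om(x)=\la\,\Phi_1(\mu x)$, where $\Phi_1$ is the minimizer of \eqref{30} at $\om=1$, so $\Phi_1\p_x(\Phi_1\p_x\Phi_1)=\Phi_1-c(1)\Phi_1^{p-1}$. Matching the coefficients of $\Phi_1(\mu x)$ and $\Phi_1^{p-1}(\mu x)$ gives $\la^2\mu^2=\om$ and $c(1)\la^3\mu^2=c(\om)\la^{p-1}$, i.e. $\la^{p-2}=\om\,c(1)/c(\om)=\om^{(p-2)/(2(p+1))}$ after invoking \eqref{300} once more; hence $\la=\om^{1/(2(p+1))}$ and $\mu=\om^{p/(2(p+1))}$. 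One then checks that $\mu=\la^p$, which is precisely the condition imposed by the normalization $\int\Phi_\om^p=\int\Phi_1^p=1$ built into \eqref{30}, so the ansatz is admissible, and uniqueness of the bell-shaped solution of \eqref{60} gives \eqref{l:49}. (Alternatively, \eqref{l:49} follows by composing \eqref{340} at parameters $\om$ and $1$ with the scaling $\phi_{\om,p}(x)=\om^{1/(p-2)}\phi_{1,p}\big(\om^{(p-4)/(2(p-2))}x\big)$ of \eqref{40}, and collapsing the exponents with the homogeneity of $c$.)

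The two substitutions and the exponent arithmetic are routine; the only genuine ingredient is the homogeneity $c(\om,p)=c(1,p)\,\om^{(p+4)/(2(p+1))}$, which is immediate from the explicit formula \eqref{300} proved in Proposition \ref{prop:22}. I therefore expect no real obstacle, the main points of care being: (i) to invoke the correct uniqueness statement — the Proposition \ref{prop:12} argument applied to \eqref{60}, not to \eqref{40} — when upgrading ``a bell-shaped solution'' to ``$\Phi_\om$''; and (ii) to verify the compatibility $\mu=\la^p$ with the $L^p$-normalization in the second step rather than take it for granted, since there the scaling must respect the constraint in \eqref{30}.
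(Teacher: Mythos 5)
Your proposal is correct and follows essentially the same route as the paper: substitute a scaling ansatz into the Euler--Lagrange equation, use the homogeneity $c(\om,p)=c(1,p)\,\om^{(p+4)/(2(p+1))}$ read off from \eqref{300}, and invoke the uniqueness of bell-shaped compactly supported solutions to identify the rescaled function with the minimizer. The only (immaterial) difference is that for \eqref{l:49} the paper rescales the variational problem \eqref{30} itself, so the $L^p$-normalization is automatic, whereas you rescale the ODE \eqref{60} and verify the constraint $\mu=\la^p$ separately; your version is, if anything, more explicit than the paper's.
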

  \begin{proof}
  	We have already discussed the uniqueness of $\vp$, and consequently of $\Phi=\sqrt{\vp}$, see the remarks after Proposition \ref{prop:22}. It remains to note that due to the relation  \eqref{300}, we have that  $c(\om,p)=const(p)  \om^{\f{p+4}{2(p+1)}}$, and so plugging in the relation \eqref{340} in the Euler-Lagrange equation \eqref{30} yields \eqref{40}, for appropriately chosen constant $c_{\om,p}$. Note that this constant $c_{\om,p}$  can be derived explicitly based on \eqref{300}, but we will not do so herein. 
  	
  	The formula for $\Phi_\om$ in terms of $\Phi_1$ is due to  an elementary scaling transformation, which transforms the variational problem \eqref{30} for general $\om>0$, into the one for $\om=1$. 
  \end{proof}
  Based on the results of Proposition \ref{prop:49}, we can claim  the following  properties of the operators $\ch_\pm$, based on the corresponding $\cl_\pm$. 
  \begin{proposition}
  	\label{prop:46} 
  	Any self-adjoint extensions of the symmetric operators $\ch_\pm$ satisfy the following properties 
  	\begin{itemize}
  		\item $\ch_-\geq 0$, with a simple eigenvalue at zero, given by $\Phi$, i.e. 
  		$$
  		Ker(\ch_-)=span[\phi], \ch_-|_{\{\phi\}^\perp}>0.
  		$$
  		\item The operator $\ch_+$ has exactly one negative eigenvalue, the second smallest eigenvalue is zero, which is also simple. In fact, $n(\ch_+)=1$, while $Ker(\ch_+)=span[\phi']$. 
  	  	\end{itemize}
  \end{proposition}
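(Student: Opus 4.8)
The plan is to transfer all the spectral information already established for $\cl_\pm$ (Propositions \ref{prop:45} and \ref{L_minus}) to $\ch_\pm$ via the explicit scaling relation \eqref{340} of Proposition \ref{prop:49}. The point is that $\ch_\pm$ and $\cl_\pm$ differ only through the replacement of $\Phi$ by $\phi$ and the removal of the coefficient $c(\om)$, and by \eqref{340} this amounts precisely to a dilation $x\mapsto \om^\al x$ together with multiplication by a positive constant. Dilations and scalar multiplications are unitary up to a positive factor and hence preserve the sign of the spectrum, the dimension of the kernel, the Morse index, and the existence of a spectral gap; they merely rescale the actual eigenvalues. So the entire content of the proposition follows once one checks the algebraic identity relating $\ch_\pm$ to a rescaled $\cl_\pm$.

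Concretely, first I would write $\phi=\phi_\om$ in terms of $\Phi_\om$ by inverting \eqref{340}, i.e. $\phi(y)= c_{p,\om}^{-1}\om^{\al}\Phi_\om(\om^{-\al}y)$, and substitute into the definitions \eqref{H} and \eqref{418}. Using the profile equation \eqref{40} (already exploited in the excerpt to rewrite $\ch_+ f = -\phi\p_x^2(\phi f) - (p-2)\phi^{p-2}f$ and $\ch_- f = -\phi\p_x^2(\phi f) + 2(\om-\phi^{p-2})f$), and the corresponding reduced forms \eqref{88} and the formula in Proposition \ref{L_minus} for $\cl_-$, one verifies that for the dilation operator $(D_\la g)(x):=g(\la x)$ with $\la=\om^{\al}$ there is an identity of the schematic form
\begin{equation*}
\ch_\pm = \mu_\pm\, D_\la^{-1}\, \cl_\pm\, D_\la
\end{equation*}
for suitable positive constants $\mu_\pm$ depending only on $p,\om$; here one uses that $c(\om,p)=\mathrm{const}(p)\,\om^{(p+4)/(2(p+1))}$ from \eqref{300}, exactly as in the proof of Proposition \ref{prop:49}, to absorb the $c(\om)$ factors. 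Since $D_\la$ is a bijective bounded operator with bounded inverse and maps $C_0^\infty(-L_\Phi,L_\Phi)$ onto $C_0^\infty(-L_\phi,L_\phi)$, it sets up a one-to-one correspondence between self-adjoint extensions of $\cl_\pm$ and of $\ch_\pm$, and conjugation by it together with multiplication by $\mu_\pm>0$ preserves $n(\cdot)$, $\ker(\cdot)$ (mapping $\Phi\mapsto$ const$\cdot\phi$ and $\Phi'\mapsto$ const$\cdot\phi'$), non-negativity, simplicity of the zero eigenvalue, and the spectral gap \eqref{215}.

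Then I would simply read off the conclusions: Proposition \ref{L_minus} gives $\cl_-\ge 0$ with simple kernel $\mathrm{span}[\Phi]$, hence $\ch_-\ge 0$ with $\ker(\ch_-)=\mathrm{span}[\phi]$ and $\ch_-|_{\{\phi\}^\perp}>0$ (the strict positivity off the kernel being the gap statement, which survives the conjugation); Proposition \ref{prop:45} gives $n(\cl_+)=1$, a simple zero eigenvalue with $\ker(\cl_+)=\mathrm{span}[\Phi']$, and $\mathrm{spec}(\cl_+)\setminus\{-\zeta^2,0\}\subset[\de,\infty)$, hence the same for $\ch_+$ with $\ker(\ch_+)=\mathrm{span}[\phi']$. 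One can double-check the kernel statements independently by direct substitution: $\ch_-\phi=0$ follows from \eqref{40} written as $-\phi\p_x(\phi\phi')+\om\phi=\phi^{p-1}$, and $\ch_+\phi'=0$ follows by differentiating \eqref{40}, exactly as in the displayed computation for $\cl_+\Phi'$ in the proof of Proposition \ref{prop:45}.

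The main obstacle, such as it is, is bookkeeping rather than conceptual: one must be careful that the ``arbitrary self-adjoint extension'' clause is handled correctly, i.e. that the correspondence $\ch_\pm^{\mathrm{ext}}\leftrightarrow \cl_\pm^{\mathrm{ext}}$ induced by conjugation with $D_\la$ is a genuine bijection on the set of self-adjoint extensions and preserves the form domains, so that the spectral conclusions of Propositions \ref{prop:45} and \ref{L_minus} — which were proved for \emph{every} such extension — transfer to \emph{every} extension of $\ch_\pm$. This is routine once one notes $D_\la$ is (a scalar multiple of) a unitary on $L^2$ and carries the quadratic forms $q_\pm$ for $\ch_\pm$ to scalar multiples of those for $\cl_\pm$; but it is the one place where a hasty argument could go wrong, so I would state it explicitly. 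Alternatively, and perhaps more cleanly, one can bypass extensions altogether on the $\ch_+$ side by invoking the Schr\"odinger-operator reduction: the change of variables $t(x)=\int_0^x dy/\phi(y)$ (the analogue of Lemma \ref{le:op}) conjugates $\ch_+$ to a Schr\"odinger operator on $\rone$ with exponentially decaying potential, for which Sturm–Liouville theory pins down the Morse index and the kernel regardless of the boundary behavior, and similarly for $\ch_-$ using the analogue of \eqref{814}.
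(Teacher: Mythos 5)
Your proposal is correct and follows exactly the route the paper intends: the paper gives no separate proof of Proposition \ref{prop:46}, simply asserting that the properties of $\ch_\pm$ follow from those of $\cl_\pm$ (Propositions \ref{prop:45} and \ref{L_minus}) via the scaling relation of Proposition \ref{prop:49}, which is precisely the conjugation-by-dilation argument you spell out. (One small simplification: matching the $\om f$ terms outside the supports forces your constants $\mu_\pm$ to equal $1$, so the operators are genuinely similar, not just similar up to a positive factor.)
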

  We are now ready to proceed with the analysis of the spectral stability of the compacton waves $\phi$. 
  We start with the degenerate NLS case. 
  \subsection{Spectral stability of the degenerate NLS compactons}
  According to the instability index theory developed in Section \ref{sec:2.3}, we start with $Ker(\ch)$. Clearly, 
  $$
  Ker(\cj\ch)=Ker(\ch)=span[\left(\begin{array}{c} Ker(\ch_+)\\ 0 \end{array} \right),  \left(\begin{array}{c} 0 \\ Ker(\ch_-) \end{array} \right)]=span[\left(\begin{array}{c} \phi'\\ 0 \end{array} \right), \left(\begin{array}{c} 0 \\ \phi \end{array} \right)].
  $$
  Looking for adjoints, we solve $\cj\ch \vec{f}= \left(\begin{array}{c} \phi'\\ 0 \end{array} \right)$, which yields 
  $$
  \vec{f}=-\left(\begin{array}{c} 0\\ \ch_-^{-1} \phi' \end{array} \right).
  $$
  Same as in the classical cases, further adjoints are impossible, behind $\vec{f}$. Indeed, assuming $\cj \ch \vec{g}=\vec{f}$, we need to solve $\ch_+ g_1=-\ch_-^{-1} [\phi']$. This is however a contradiction by Fredholm theory, since 
  $$
  0=\dpr{\ch_+ g_1}{\phi'}=-\dpr{\ch_-^{-1} [\phi']}{\phi'}<0,
  $$
  due to the fact that $\ch_-^{-1}|_{\{\phi\}^\perp}>0$. 
  
  On the other hand, solving $\cj\ch \vec{f}= \left(\begin{array}{c} 0\\ \phi \end{array} \right)$, produces an adjoint 
  $$
  \vec{f}=  \left(\begin{array}{c} \ch_+^{-1} \phi\\ 0 \end{array} \right).
  $$
  Looking for further adjoints involves the quation $\cj\ch \vec{g}= \left(\begin{array}{c} \ch_+^{-1} \phi\\ 0 \end{array} \right)$, which results in \\ $\ch_- g_2=-\ch_+^{-1} \phi$. By Fredholm theory, this requires a solvability condition $\dpr{\ch_+^{-1} \phi}{\phi}=0$. Thus, we may conclude that as long as $\dpr{\ch_+^{-1} \phi}{\phi}\neq 0$, 
  $$
  gKer(\cj\ch)\ominus Ker(\ch)=span[\left(\begin{array}{c} \ch_+^{-1} \phi\\ 0 \end{array} \right)].
  $$
  According to the instability index theory, the matrix $D$ is one dimensional and the stability is determined by the sign of  
  $\dpr{\ch_+^{-1} \phi}{\phi}$. 
  
  It turns out that this is a quantity easy to compute. In fact, from the results of Proposition \ref{prop:49}, we can see that the mapping $\om\to \phi_\om$ is a $C^1$ mapping from the positive line into the set of functions. This allows us to take a Frechet derivative with respect to $\om$ in the profile equation \eqref{40}, just as in the classical case. In short, we obtain 
  $$
  \ch_+[\p_\om \phi_\om]=-\phi, 
  $$
  whence 
  $$
  \dpr{\ch_+^{-1} \phi}{\phi}=-\dpr{\p_\om, \phi_\om}{\phi_\om}=-\f{1}{2} \p_\om \int \phi^2 dx.
  $$
   \subsection{Spectral stability for  the degenerate KdV waves }
   We use, again,  the procedure,  outlined in Section \ref{sec:2.3}.   As established in Proposition \ref{prop:46}, 
   the eigenvalue problem \eqref{Lin} clearly has a one dimensional kernel, namely  $Ker(\ch_+)=span[\phi']$. We now proceed to find the generalized kernel of $\p_x \ch_+$. So, we need a $\psi\in D(\p_x \ch_+)\subset L^2(\rone)$, so that $\p_x\ch_+ \psi=\phi'$. 
   It follows that 
   \begin{equation}
   	\label{400} 
   	\ch_+\psi=\phi+c, x\in \rone .
   \end{equation}
  Taking into account the specific form of the operator $\ch_+$, in particular \eqref{H}, we conclude that for $|x|>L$, one must have 
  $$
 \ch_+\psi(x)= \om \psi(x)=c, |x|>L,
  $$
  since $supp \phi\subset (-L,L)$. 
  This is of course impossible, as $\psi\in L^2(\rone)$, unless 
    $c=0$. Thus, \eqref{400} becomes $\ch_+ \psi=\phi$. 
    
    Noting that $\phi\perp Ker(\ch_+)$, so that $\ch_+^{-1} \phi$ exists (uniquely in the subspace $Ker(\ch_+)^\perp$),  it follows that $\psi\in \ch_+^{-1} \phi+Ker(\ch_+)$, so we may select 
  $$
  \psi=\ch_+^{-1} \phi,
  $$
   and this is the unique element generating the subspace $gKer(\p_x \ch_+) \ominus Ker(\ch_+)$. 
   It follows that the stability of the traveling wave $\phi(x- \om t)$, once again is equivalent to the condition $\dpr{\ch_+^{-1} \phi}{\phi}<0$. 
   
   \subsection{Computation of $\dpr{\ch_+^{-1} \phi}{\phi}$}
   Since $\phi:(0, \infty)\to (0, \phi_0)$ is a bijection and based on the representation \eqref{k:129}, we have 
   \begin{eqnarray*}
   	\int_{-\infty}^\infty \phi^2 dx &=& 2\int_0^\infty \phi^2 dx = -2\int_0^{\infty} \f{\phi^2}{\phi'} d x=
   	2\int_0^{\phi_0} \f{\phi^2}{\sqrt{\om-\f{2}{p}\phi^{p-2}}} d\phi=\\
   	&=& 
   	2\phi_0^3 \int_0^1 \f{z^2}{\sqrt{\om-\f{2}{p} \phi_0^{p-2} z^{p-2}}} dz
   	= 2\left(\f{p}{2}\right)^{\f{3}{p-2}} \om^{\f{3}{p-2}-\f{1}{2}} \int_0^1 \f{z^2}{\sqrt{1-z^{p-2}}} dz,
   \end{eqnarray*}
   so that 
   $$
   D =-\f{1}{2} \p_\om 	\int_{-\infty}^\infty \phi^2 dx=   -\left(\f{p}{2}\right)^{\f{3}{p-2}} \f{(8-p)}{2(p-2)}  \om^{\f{3}{p-2}-\f{3}{2}} \int_0^1 \f{z^2}{\sqrt{1-z^{p-2}}} dz,
   $$
This calculation   yields the necessary and sufficient condition\footnote{The point $p=8$ is the threshold for stability, similar to the case $p=5$ for the standard NLS. At this value, if we use $p$ as a bifurcation parameter, 
		going from $p<8$ to $p>8$, there is a crossing of a pair of purely imaginary eigenvalues through zero 
	 	to a real pair of eigenvalues, one positive and one negative }  for stability $2<p\leq 8$.

\end{document}